\numberwithin{equation}{section}
\newtheorem{theorem}{ Theorem}[section] 
\theoremstyle{remark}
\newtheorem{definition}{Definition}[section]   
\newtheorem{proposition}[theorem]{Proposition}
\newtheorem{corollary}[theorem]{Corollary}
\newtheorem{lemma}[theorem]{ Lemma}
\newcommand{\set}[1]{\left\{ #1 \right\}}
\newcommand{\upla}[1]{\left( #1 \right)}
\newcommand{\innerProd}[2]{\left \langle #1, #2 \right \rangle}
\newcommand{\Eijk}{\epsilon_{ijk}}
\newcommand{\Hijk}{\xi_{ijk}}
\newcommand{\vacuum}{\mathbbm{1}}				
\newcommand{\id}{{\bf id}}						
\newcommand{\Hc}{{ \mathcal{H}}}				
\newcommand{\Hf}{{ \mathfrak{H}}}				
\newcommand{\Cc}{{ \mathfrak{C}_0}}				
\newcommand{\Complexn}{\mathbbm{C}}				
\newcommand{\imu}{ {\bf i}}						
\newcommand{\Gg}{\mathfrak{g}}					
\newcommand{\VT}{V_{\mathbbm{T}^3}}				
\newcommand{\KacMoody}{\widehat{\Gg}}			
\newcommand{\HeisR}{{\bf Heis}(\mathbbm{R})}		
\newcommand{\heisR}{{\bf heis}(\mathbbm{R})}		
\newcommand{\HeisZ}{{\bf Heis}(\mathbbm{Z})}		
\newcommand{\Res}{{\bf res}}						
\newcommand{\Field}[1]{{\bf Field} \left( #1\right)}		
\newcommand{\Lfield}[1]{{\bf LField} \left( #1\right)}		
\newcommand{\NormallyOP}[1]{{\bf \colon} #1 {\bf \colon}}
\newcommand{\Hom}[1]{{\bf Hom} \left( #1 \right)}
\newcommand{\End}[1]{{\bf End} \left( #1 \right)}
\newcommand{\Ind}[2]{{\bf Ind}_{#1}^{#2}} 		
\newcommand{\CoInd}[2]{{\bf CoInd}_{#1}^{#2}} 		
\newcommand{\bk}[1]{\left[ #1 \right]} 			
\newcommand{\LOGz}{\log (z)}					
\newcommand{\LOG}[1]{\log \left( #1 \right)}	
\newcommand{\Abs}[1]{ \left \vert #1 \right \vert}
\def\cH{\mathcal{H}}
\def\fg{\mathfrak{g}}
\def\cO{\mathcal{O}}
\def\cL{\mathcal{L}}
\author[Morales B. R.]{Bely Rodriguez Morales}
\title{Logarithmic Modules for chiral differential operators of nilmanifolds}
\keywords{chiral differential operators, vertex algebra, logarithmic module, logarithmic quantum field}
\email{belyto.rodriguez@gmail.com}
 \subjclass[2010]{17B69, 81R10}
\begin{document}

\begin{abstract} 
	We describe explicitly the vertex algebra of (twisted) chiral differential operators on certain nilmanifolds and construct their logarithmic modules. This is achieved by generalizing the construction of vertex operators in terms of exponentiated scalar fields to Jacobi theta functions naturally appearing in these nilmanifolds. This provides with a non-trivial example of logarithmic vertex algebra modules, a theory recently developed by Bakalov. 
\end{abstract}

\maketitle



\section{Introduction}

To any smooth manifold $M$ (and a choice of $\omega \in H^3(M, \mathbb{Z})$) satisfying some mild topological properties (the first Pontryagin class vanishes), Malikov, Schechtmann and Vaintrob \cite{malikov1999chiral} and independently Beilinson and Drinfeld \cite{beilinson2004chiral} attach a sheaf of vertex algebras $\mathcal{O}^{ch}_M$ called the sheaf of \emph{chiral differential operators}. In the simplest case when $\omega =0$, locally on a coordinate patch $U$ with coordinates $\left\{ x^i \right\}_{i=1,\ldots,\dim M}$, the sections $\cO^{ch}_M(U)$ form an $\dim M$-dimensional $\beta\gamma$-system, i.e., the vertex algebra generated by fields $\left\{\beta_i, \gamma^i\right\}_{i = 1,\ldots,\dim M}$ satisfying the OPE
\[ 
\beta_i(z) \cdot \gamma^j(w) \sim \frac{\delta_{i,j}}{z -w}, \qquad \beta_i(z) \cdot \beta_j(w) \sim \gamma^i(z) \cdot \gamma^j(w) \sim 0. 
\]
On intersections of coordinate patches, the fields $\gamma^i$ change as coordinates do  while the fields $\beta_i$ change as vector fields. 

This construction works in the algebraic, holomorphic, real-analytic or $C^\infty$-setting. In this work we will be mainly concerned with the $C^\infty$-setting. Little is known about the structure of the global sections $V_M=\Gamma(M, \cO^{ch}_M)$ of this sheaf. Only recently in the context of supermanifolds, Bailin Song proved that, in the holomorphic setting, the vertex algebra $V_M$ coincides with the simple small $N=4$ super-vertex algebra at central charge $c=6$ when $M = T^*[1]N$ is the shifted cotangent bundle to a $K3$ surface $N$ \cite{song2016vector}. In this work we will describe explicitly $V_M$ when $M$ is the Heisenberg $3$-dimensional nilmanifold. \\


The vertex algebra $V_M$ (or rather its super-extension) is expected to play a central role in Mirror-Symmetry. In particular, for $M$ and $N$ a mirror pair of Calabi-Yau manifolds, one expects a natural isomorphism $V_M \simeq V_N$ of vertex algebras. Their characters are known to be equal by work of Borisov and Libgober \cite{borisov2000elliptic}.

If $M$ is non-simply-connected, a subtle phenomenon arises as one needs to consider non-trivial \emph{windings}. Aldi and Heluani showed in \cite{aldi2012dilogarithms}, based on ideas of C. Hull \cite{hull2009double}, that when $(M, \omega)$ is the three torus $\mathbb{T}^3$ with its generator of $H^3(\mathbb{T}^3, \mathbb{Z}) \simeq \mathbb{Z}$, or if $M$ is its mirror dual: the Heisenberg $3$-dimensional nilmanifold $N$ with vanishing $\omega$, the vertex algebra $V_M$ can be naturally represented in a Hilbert space. 
This Hilbert space 
is associated to a $6$-dimensional nilmanifold $Y$ which fibers over both $\mathbb{T}^3$ and $N$. \\

%

For certain $M$, we can describe explicitly $V_M$ in terms of a larger manifold fibering over $M$. Suppose that the $\omega$-twisted Courant algebroid $TM \oplus T^*M$ of $M$ is parallellizable. That is, there exists a global frame of vector fields $\left\{ \beta_i \right\}$ and dual basis of differential forms $\left\{ \alpha^i \right\}$ such that $[\beta_i, \beta_j]_{Lie} + \iota_{\beta_i} \iota_{\beta_j} \omega$ is a constant combination of $\beta_i$'s and $\alpha^i$'s, and $\mathrm{Lie}_{\beta_i} \alpha^j$ is a constant linear combination of the $\alpha^i$'s. In other words, there exists a Lie algebra $\fg$, $\dim \fg = 2 \dim M$, with a symmetric invariant bilinear pairing of signature $(\dim M, \dim M)$, and a trivialization $T M \oplus T^*M \simeq \fg \times M$. The Courant-Doffman bracket of the frame $\left\{ \beta_i, \alpha^i \right\}$ is given by the bracket in $\fg$.

The approach, following ideas of C. Hull \cite{hull2009double} and exploited for example in \cite{bouwknegt2004t} is that one may try to find a manifold $N$ with the property that $\dim N = 2 \dim M$. It fibers over $M$, $N \twoheadrightarrow M$ and its parallelizable, such that $TN \simeq \fg \times N$, that is, the \emph{Lie bracket} of vectors in a frame is identified with the Lie bracket of $\fg$. In this situation we consider the $\fg$-module $C^\infty(N)$, the Kac-Moody affinization $\hat{\fg}$ of $\fg$ and its induced module from $C^\infty(N)$. We have an embedding $C^\infty(M) \hookrightarrow C^\infty(N)$ given by pullback, inducing the sequence of embeddings:
\[ 
V^1(\fg) \subset \Ind{\hat{\fg}_+}{\hat{\fg}} C^\infty(M) \subset \cH = \Ind{\hat{\fg}_+}{\hat{\fg}} C^\infty(N), 
\]
where the first module is induced from the constant function $1$, coincides with the vacuum module for the algebra $\hat{\fg}$ and is known to be a vertex algebra. The second module coincides with the vertex algebra $V_M$ and is here represented as a subspace of $\cH$. \\

Let $G$ be the unipotent Lie group with Lie algebra $\fg$. It is also an extension of $\mathbb{R}^3$ by $\mathbb{R}^3$. Let $\Gamma \subset G$ be the subgroup generated by a basis of the quotient $\mathbb{R}^3$ of $G$. It is a cocompact subgroup, the quotient $Y = G/\Gamma$ is a six-dimensional nilmanifold which is a non-trivial $\mathbb{T}^3$-fibration over $\mathbb{T}^3$. In fact we have the central extensions:
\begin{equation}\label{eq:central-extensions}
\begin{gathered}
0 \rightarrow \mathbb{R}^3 \rightarrow G \rightarrow \mathbb{R}^3 \rightarrow 0 \\
0 \rightarrow \mathbb{Z}^3 \rightarrow \Gamma \rightarrow \mathbb{Z}^3 \rightarrow 0 
\end{gathered}
\end{equation}
Showing $Y$ as a $\mathbb{T}^3 = \mathbb{R}^3/\mathbb{Z}^3$ fibration over $\mathbb{T}^3$. 

The Lie group $G$ acts on $L^2(Y)$ and its Lie algebra $\fg$ acts on $C^\infty(Y)$. We extend the $\fg = \fg \otimes t^0 \subset \hat{\fg} = \fg [t,t^{-1}] \oplus \mathbb{C}K$ action on $C^\infty(Y)$ into a representation of $\hat{\fg}_+ = \fg [t] \oplus \mathbb{C}K$ by letting $K$ act by $1$ and $a_n$ act by $0$ if $n \geq 0$. The vector space $\cH$ is the corresponding $\hat{\fg}$-induced module $\cH = \Ind{\hat{\fg}_+}{\hat{\fg}} C^{\infty}(Y)$.

Properly speaking $\cH$ is its $L^2$ completion, but we will not care about unitarity properties in this work. 

Notice that the constant function $1$ defines an embedding $V^1(\fg) \hookrightarrow \cH$ of the vacuum representation of $\hat{\fg}$ into $\cH$. As it is well known $V^1(\fg)$ is a vertex algebra and this embedding makes $\cH$ into a $V^1(\fg)$-module. 
Consider now the three Torus $\mathbb{T}^3 = \mathbb{R}^3 / \mathbb{Z}^3$, the morphism $Y \twoheadrightarrow \mathbb{T}^3$ provides an embedding $C^\infty(\mathbb{T}^3) \hookrightarrow C^\infty(Y)$. It is easy to see that this is an embedding of $\fg$-modules. The induced $\hat{\fg}$-module coincides with $V_{\mathbb{T}^3}$, that is 
\[ 
V^1(\fg) \subset V_{\mathbb{T}^3} \simeq \Ind{\hat{\fg}_+}{\hat{\fg}} C^\infty(\mathbb{T}^3) \subset \cH. 
\]
However a little work is required to check that $\cH$ is a vertex algebra module over $V_{\mathbb{T}^3}$. The fields associated to vectors $f \in C^\infty(\mathbb{T}^3)$ involve explicitly logarithms of the formal variable $z$. The situation is very similar to that of the lattice vertex algebra where the logarithms only appear exponentiated, hence appealing to the identity $\exp(\log(z)) = z$ one can get rid of them. 

The situation with the Heisenberg nilmanifold is a quite different. Any line $\cL \subset \mathbb{R}^3$ determines a central character $\chi_\cL : Z(G) \simeq \mathbb{R}^3 \rightarrow \mathbb{R}$ of $G$. We can view $\cL$ as a one dimensional subgroup of $G$ (in the quotient $\mathbb{R}^3$). The subgroup $K = \ker \chi_\cL \oplus \cL \subset G$ is normal and its cokernel 
\[ 
0 \rightarrow K \rightarrow G \rightarrow \HeisR \rightarrow 0,	
\]
is the $3$-dimensional real Heisenberg group. If the line $\cL$ is generated by an element of $\Gamma$, this sequence is compatible with $\Gamma$ in the sense that there exists an analogous sequence 
\[ 
0 \rightarrow K_\Gamma \rightarrow \Gamma \rightarrow \HeisZ \rightarrow 0, 
\]
whose quotient is now the integer Heisenberg group. This construction shows $Y$ as a fibration over the Heisenberg nilmanifold $N = \HeisR/ \HeisZ$ (it is not hard to see that the fiber is also a three torus $\mathbb{T}^3$). 

We obtain thus an embedding $C^\infty(N) \hookrightarrow C^\infty(Y)$. As before it is easy to see that this is an embedding of $\fg$-modules. It turns out that the induced $\hat{\fg}$-module is also isomorphic to the vertex algebra $V_N$:
\[ 
V^1(\fg) \subset V_N \simeq \Ind{\hat{\fg}_+}{\hat{\fg}} C^\infty(N) \subset \cH. 
\]
This time however, logarithms are unavoidable. In fact, the fields associated to vectors of $V_N$ have explicit logarithms of $z$ on them when acting on $\cH$. It is only by restricting to $V_N \subset \cH$ that they disappear by use of the same identity $\exp(\log(z))=z$. However, when analyzing the action of $V_N$ on $\cH$ these logarithms remain, making $\cH$ a \emph{logarithmic module} over $V_N$. \\

In order to describe explicitly the fields of $V_N$ and their action on $\cH$, we need to use certain results from harmonic analysis. In particular, since the representation of $G$ in $L^2(Y)$ is unitary, it decomposes into direct sum of irreducible representations. These representations turn out to be induced from unitary irreducible representations of the real Heisenberg group, and by the Stone-von Neumann theorem they are unique once we choose a central character. One can choose explicit cyclic vectors for these representations: they are given by appropriate constant (the vacuum vector), exponential (functions from $\mathbb{T}^3$), or Jacobi theta functions. We construct vertex operators associated to these Jacobi theta functions in complete analogy as how one constructs vertex operators associated to exponential functions. These operators however, carry an explicit dependency on the logarithm of the formal variable. We show by explicit computation the locality and translation invariant property as well as the axioms for a logarithmic module as in \cite{bakalov2016twisted}. 
The main results of this work  are:

\begin{theorem}
	$\Hc$ has the structure of  $\VT$-module.
\end{theorem}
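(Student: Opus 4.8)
The plan is to construct an explicit state--field correspondence for $\VT$ acting on $\Hc$ and then invoke an existence (reconstruction) theorem for vertex algebra modules. By construction $\Hc$ is a smooth level--one module over $\widehat{\mathfrak g}$, obtained by inducing the $\mathfrak g$--module $C^\infty(Y)$; in particular it is already a module over the affine vertex algebra $V^1(\mathfrak g)$, so the currents $a(z)=\sum_k a_{(k)}z^{-k-1}$, $a\in\mathfrak g$, act on $\Hc$. The key observation is that the inclusion $C^\infty(\mathbb T^3)\hookrightarrow C^\infty(Y)$ coming from the fibration $Y\twoheadrightarrow\mathbb T^3$ is not merely a morphism of $\mathfrak g$--modules but realizes $C^\infty(Y)$ as a module over the commutative algebra $C^\infty(\mathbb T^3)$, compatibly with the $\mathfrak g$--action; applying the induction functor should therefore turn the vertex algebra $\VT\simeq\mathrm{Ind}\,C^\infty(\mathbb T^3)$ into an algebra acting on $\Hc=\mathrm{Ind}\,C^\infty(Y)$. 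Since the currents together with the exponential vectors $\mathbf e_n:=e^{2\pi i\langle n,x\rangle}\in C^\infty(\mathbb T^3)$, $n\in\mathbb Z^3$, topologically strongly generate $\VT$, it suffices to attach to each $\mathbf e_n$ a field $Y_{\Hc}(\mathbf e_n,z)$ on $\Hc$, to verify the module axioms, and to conclude by reconstruction that the data extend uniquely to a $\VT$--module structure, automatically compatible with the inclusion $\VT\subset\Hc$.

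First I would write down the fields, imitating the construction of the vertex operators attached to exponentials inside $\VT$ itself (the exponentiated scalar fields of the abstract). Let $\{q^i\}$ be the frame of the ``$T$--part'' of $\mathfrak g$ dual under the invariant pairing to the frame $\{p_i\}$ of its centre, normalised so that the zero modes $q^i_{(0)}$ act on the bottom $C^\infty(Y)\subset\Hc$ by the globally defined vector fields provided by $TY\simeq\mathfrak g\times Y$. Then put
\[
Y_{\Hc}(\mathbf e_n,z)\;:=\;E_n\;z^{\,\sum_i n_i q^i_{(0)}}\;
\exp\!\Bigl(\sum_i n_i\sum_{k<0}\tfrac{q^i_{(k)}}{-k}\,z^{-k}\Bigr)\,
\exp\!\Bigl(\sum_i n_i\sum_{k>0}\tfrac{q^i_{(k)}}{-k}\,z^{-k}\Bigr),
\]
where $E_n$ denotes multiplication by the pull--back of $\mathbf e_n$ to $Y$, defined on $\Hc$ because $C^\infty(\mathbb T^3)$ acts on $C^\infty(Y)$. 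That each $Y_{\Hc}(\mathbf e_n,z)$ is a (logarithmic) field on $\Hc$ is then immediate. The factor $z^{\,\sum_i n_i q^i_{(0)}}=\exp\bigl(\log z\cdot\sum_i n_i q^i_{(0)}\bigr)$ is the source of the logarithms of the formal variable; on $\VT\subset\Hc$ the operators $q^i_{(0)}$ are diagonalizable with integer eigenvalues on the characters, so the logarithms collapse to integral powers of $z$ via $\exp(\log z)=z$ and one recovers the ordinary vertex operators of $\VT$, while on all of $\Hc$ the formula must be kept as written. (For fixed $n$ the modes $\sum_i n_i q^i_{(k)}$, $k\in\mathbb Z$, pairwise commute in $\widehat{\mathfrak g}$, since $[q^i,q^j]$ is antisymmetric in $i,j$ while $n_in_j$ is symmetric and $\langle q^i,q^j\rangle=0$; so no normal--ordering ambiguity arises inside the exponentials.)

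Next I would verify the axioms. The current--current OPEs are the Kac--Moody ones; translation covariance $[\mathrm D,Y_{\Hc}(v,z)]=\partial_z Y_{\Hc}(v,z)$, with $\mathrm D$ the translation operator on $\Hc$, and the vacuum axiom $Y_{\Hc}(\vacuum,z)=\mathrm{id}$ are immediate from the formulas. The OPE of $a(z)$ with $Y_{\Hc}(\mathbf e_n,w)$ reduces to the brackets $[a_{(k)},q^j_{(\ell)}]$ in $\widehat{\mathfrak g}$ and to the fact that $[a_{(0)},E_n]$ is a scalar multiple of $E_n$, and has only a simple pole, with residue the operator prescribed by the $\VT$--action on $\mathbf e_n$, up to a central current $p(w)$ produced by $[q^i,q^j]$. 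The delicate point is the OPE of $Y_{\Hc}(\mathbf e_n,z)$ with $Y_{\Hc}(\mathbf e_m,w)$: moving one mode exponential past the other produces a factor $e^{[A,B]}$ in which $[A,B]$ is a combination of central currents $p_s$ whose coefficients resum, in the region $|w|<|z|$, into (di)logarithms of $w/z$, while vanishing of the quadratic pairing $\langle\sum_i n_i q^i,\sum_j m_j q^j\rangle$ makes the two operators mutually bosonic, so no sign cocycle is needed; after also moving $z^{\,\sum_i n_i q^i_{(0)}}$ past $E_m$ one checks that the result equals $Y_{\Hc}\!\bigl(Y_{\VT}(\mathbf e_n,z-w)\mathbf e_m,\,w\bigr)$ expanded in the appropriate region. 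This is precisely locality together with the weak--associativity (Borcherds) identity for a module in the sense of \cite{bakalov2016twisted}, and reconstruction then delivers the $\VT$--module structure on $\Hc$.

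The main obstacle is exactly this last OPE computation: the careful bookkeeping of the interplay between the multiplication operators $E_n$, the non--semisimple zero modes $q^i_{(0)}$ (hence the logarithms), and the mode exponentials, and the verification that the outcome is the image under $Y_{\Hc}$ of the corresponding $n$--th products in $\VT$, so that the module axioms hold. Everything else is routine; this is the ``little work'' alluded to in the introduction, and the analogous but genuinely logarithmic version of the same computation, with the Heisenberg nilmanifold $N$ in place of $\mathbb T^3$, is what the remainder of the paper carries out.
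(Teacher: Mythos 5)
Your overall strategy---write down explicit vertex operators for the exponential generators of $C^\infty(\mathbb{T}^3)$ together with the currents, check locality and compatibility with the $n$-th products, and extend by normally ordered products (reconstruction)---is exactly the paper's. The gap is in the choice of modes you exponentiate. You take $\set{q^i}$ to be the frame of $\fg$ \emph{dual} to the frame $\set{p_i}$ of the centre, i.e.\ in the paper's notation $q^i=\beta_i$ and $p_i=\alpha^i$; this reading is confirmed by your later remarks that $[q^i,q^j]$ produces central currents $p_s(w)$ and that the $\mathbf e_n$--$\mathbf e_m$ OPE resums into dilogarithms. With that choice the construction does not represent multiplication by $e^{2\pi\imu n_ix^i}$: since $[\alpha^j_m,\beta_{i,k}]=m\,\delta_{i,j}\,\delta_{m,-k}K$, one finds that $\alpha^j(z)_{(0)}Y_{\Hc}(\mathbf e_n,z)=n_j\,Y_{\Hc}(\mathbf e_n,z)\neq 0$, whereas $\alpha^j=\partial_{x^*_j}$ annihilates $e^{2\pi\imu n_ix^i}$, so the module axiom $Y(\alpha^j_{(0)}\mathbf e_n)=Y(\alpha^j_{-1}\vacuum)_{(0)}Y(\mathbf e_n)$ fails (the left side is $0$, the right side is not). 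Moreover, with $\beta$-modes inside the exponentials the $\mathbf e_n$--$\mathbf e_m$ product genuinely involves the dilogarithmic resummation you describe, and the resulting fields are not local in the required sense; that phenomenon belongs to the $H$-twisted torus of Aldi--Heluani, not to the untwisted $\VT$ sitting inside $\Hc$.

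The correct construction, and the one the paper uses, exponentiates the \emph{position} field $x^i(z)=W^i\LOGz+\sum_n x^i_n z^{-n}$, whose oscillators are the rescaled \emph{central} modes $x^i_n=-\tfrac{1}{n}\alpha^i_n$ (so that $D_z x^i(z)=\alpha^i(z)$), together with the multiplication operator $x^i_0$ and $W^i=\alpha^i_0$. Then all the exponential fields commute with one another and with the $\alpha$-currents, so the OPE you single out as the ``main obstacle'' is trivial; the only non-trivial bracket is $[\beta_j(z_1),e^{2\pi\imu\rho_i x^i(z_2)}]=2\pi\imu\rho_j\,e^{2\pi\imu\rho_i x^i(z_2)}\,\delta(z_1,z_2)$, a single simple pole reproducing the action of $\beta_j$ on $C^\infty(\mathbb{T}^3)$, and the only $n$-th product that must be checked by hand is the zeroth product of $\beta_j(z)$ with the exponential field. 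So the hard part of your plan is misplaced, and as written the construction would not satisfy the module axioms; once the modes are corrected, the verification reduces to the short computation carried out in the paper.
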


\begin{theorem}
	$\Hc$ has the structure of logarithmic $V_N$-module.
\end{theorem}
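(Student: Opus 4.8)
The plan is to construct, for each cyclic vector in the Stone--von Neumann decomposition of $L^2(Y)$, an associated \emph{logarithmic field} on $\cH$ and then verify that the assignment $f \mapsto Y(f,z)$ satisfies the axioms of a logarithmic $V_N$-module as formulated in \cite{bakalov2016twisted}. First I would recall the explicit description of $C^\infty(N) \hookrightarrow C^\infty(Y)$: under the harmonic-analytic decomposition, $L^2(Y) = \bigoplus_\chi \mathcal{H}_\chi$ where the sum runs over central characters, each $\mathcal{H}_\chi$ is an induced representation of $\HeisR$, and a cyclic vector in $\mathcal{H}_\chi$ is given by a suitable Jacobi theta function $\vartheta_\chi$. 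The key definition is the vertex operator attached to such a $\vartheta_\chi$, built in exact analogy with the lattice-vertex-algebra formula $Y(e^\lambda, z) = \exp\!\left(\sum_{n<0} \tfrac{\lambda_{(n)}}{-n} z^{-n}\right)\exp\!\left(\sum_{n>0} \tfrac{\lambda_{(n)}}{-n} z^{-n}\right) e^\lambda z^{\lambda_{(0)}}$, but where the zero-mode exponent $z^{\lambda_{(0)}}$ is replaced by a genuine operator-valued expression involving $\LOGz$ that does not collapse to a power of $z$ because the relevant zero-mode acts with non-semisimple (or irrational/shifted) spectrum coming from the theta function's quasi-periodicity. I would isolate the finite-dimensional generalized eigenspace decomposition of this zero mode and write $Y(\vartheta_\chi,z)$ as a finite sum $\sum_{k} Y_k(\vartheta_\chi, z)(\LOGz)^k$ with each $Y_k$ a formal distribution in $z^{\pm 1}$ (possibly with a real power shift), which is precisely the shape required by Bakalov's definition of a logarithmic field.

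Next I would establish the two structural properties that make this assignment a module map. The first is \textbf{locality}: for $f, g \in C^\infty(N)$ one must show $(z-w)^{N}\,[Y(f,z), Y(g,w)] = 0$ for $N \gg 0$, now in the logarithmic sense (i.e. allowing $\LOGz, \LOG{w}$ and $\LOG{z-w}$ to appear), which reduces — as in the exponential case — to a computation with the OPE of the underlying $\hat\fg$-currents together with a combinatorial identity for the normal-ordered product of two exponentiated theta-function fields; the theta-function transformation law replaces the bilinear-form factor $(z-w)^{\langle\lambda,\mu\rangle}$ of the lattice case by a factor built from $\LOGr{w}{z}$ and dilogarithm-type corrections, exactly the phenomenon already visible in \cite{aldi2012dilogarithms}. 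The second is \textbf{translation covariance}: $[T, Y(f,z)] = \partial_z Y(f,z)$ where $T$ is the translation operator of $\cH$ coming from $L_{-1}$ of $V^1(\fg)$; this follows by differentiating the explicit exponential formula and checking that the $\LOGz$-derivative term $\tfrac{1}{z}(\cdots)$ matches the action of $T$ on the zero mode. With locality and translation covariance in hand, together with the vacuum axiom $Y(f,z)\vacuum|_{z\to 0} = f$ (immediate from the induced-module structure), one invokes the reconstruction/existence theorem for logarithmic modules in \cite{bakalov2016twisted} to conclude that these fields extend uniquely to a logarithmic-module structure for the whole vertex algebra $V_N$, using that $V_N$ is strongly generated by $\fg$ and $C^\infty(N)$.

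The main obstacle I anticipate is the locality computation for pairs of theta-function fields. Unlike the lattice case, where the product of two vertex operators produces a clean meromorphic prefactor $(z-w)^{\langle\lambda,\mu\rangle}$, here the normal-ordering of $\exp$ of theta-function-valued fields produces transcendental prefactors (logarithms and dilogarithms of cross-ratios of $z,w$), and one must verify that the potential obstruction to locality — the difference between the two orders of multiplication — actually vanishes after multiplication by a suitable power of $(z-w)$, i.e. that all the transcendental terms are symmetric under $z \leftrightarrow w$ up to the expected monodromy. Concretely this means controlling the quasi-periodicity factors of the Jacobi theta functions under the lattice translations encoded by the windings, and checking that the central extension cocycle in \eqref{eq:central-extensions} is exactly what is needed to cancel the asymmetric part. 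I would handle this by reducing to rank-one sublattices (one theta function at a time) where the identity $\exp(\LOGz) = z$ is replaced by the heat-equation / modular functional equation for $\vartheta$, and then assembling the general case multiplicatively; I expect the bookkeeping of branch choices for $\LOGz$ and $\LOG{z-w}$ to be the most delicate part, and it is exactly here that the logarithmic (as opposed to ordinary) module structure is forced.
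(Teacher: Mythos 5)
Your overall strategy---attaching vertex operators to the theta-function cyclic vectors of the Stone--von Neumann decomposition in analogy with lattice vertex operators, with the zero modes contributing $\log(z)$ terms, all inside Bakalov's framework---is the same as the paper's. The gap is at the final step. You propose to conclude by invoking a ``reconstruction/existence theorem'' once locality, translation covariance and the vacuum axiom are checked. For a \emph{module} structure this is not sufficient: the definition of a logarithmic $V_N$-module used here requires $Y(a_{(n)}b) = Y(a)_{(n)}Y(b)$ for all $n \in \mathbbm{Z}$, i.e.\ the assignment $V_N \rightarrow \overline{\mathcal{W}} \subseteq \Lfield{\Hc}$ must be a vertex algebra \emph{homomorphism} onto the algebra generated by your local fields (this is exactly the content of the Corollary following Bakalov's theorem in the paper). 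Since $V_N$ is not freely generated---the vectors $\beta_{i,0}F_m$ and $\alpha^3_0 F_m$ are specific elements of $\Cc$ with prescribed images---locality of the generating fields does not guarantee this, and Goddard-type uniqueness arguments identify fields only by their action on the vacuum inside $V_N$; they cannot force $Y(a_{(n)}b)$ and $Y(a)_{(n)}Y(b)$ to agree on all of $\Hc \supsetneq V_N$. The paper closes precisely this point by computing the $0$-th products $\alpha^i(z)_{(0)}Y(F_m,z)$, $\beta_1(z)_{(0)}Y(F_m,z)$, $\beta_2(z)_{(0)}Y(F_m,z)$ via the propagator formula and matching them against $Y(\alpha^i_{(0)}F_m,z)$, $Y(\beta_{i(0)}F_m,z)$; the negative products come for free because fields of composite states are defined as normally ordered products. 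Your plan needs this verification made explicit.

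A secondary misdirection: you locate the main difficulty in the locality of two theta-function fields, expecting the lattice prefactor $(z-w)^{\langle\lambda,\mu\rangle}$ to be replaced by dilogarithms of cross-ratios. In the coordinates the paper uses, the fields $Y(F_{m_1},z_1)$ and $Y(F_{m_2},z_2)$ are built entirely from the $x$-type modes and the paper disposes of that pair immediately (they commute); no dilogarithms arise anywhere in the argument. The genuinely logarithmic phenomena, and the real computational work, sit in the commutators of the currents $\beta_1(z_1)$, $\beta_2(z_1)$ with $Y(F_m,z_2)$, which produce terms such as $2\pi \imu n\, W^1\, Y(F_m,z_2)\log(z_2)\,\delta(z_1,z_2)$. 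It is exactly because the winding operators $W^i$ annihilate $V_N$ but act nontrivially on $\Hc$ that $\Hc$ is a logarithmic module while $V_N$ itself remains an honest vertex algebra; your outline should direct its effort there rather than at the theta--theta pair.
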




\section{Quantum Fields and Vertex Algebras}

Let $V$ be a vector space over $\Complexn$, the quantum fields on $V$ are defined as $\Field{V} =\Hom{V, V(\!(z)\!)}$ where $V(\!(z)\!) = V[\![z]\!][z^{-1}]$ denotes the space of Laurent series on $V$; i.e. a field on $V$ is a formal series $a(z) = \sum_{n \in \mathbbm{Z}} a_{(n)} z^{-1-n}$  where $a_{(n)} \in \End{V}$ and for each $v \in V$,  $a_{(n)} v = 0$ for $n$ large enough. 

Two quantum fields $a(z_1), b(z_2)$ are called local if there is $N \in \mathbbm{N}$ such that
\begin{equation}	\label{LocalityVA}
(z_1-z_2)^N \bk{a(z_1), b(z_2)} = 0.
\end{equation}

\begin{lemma}[Dong]\cite{kac1998vertex}		\label{DongsLemmaVertexAlgebras}
	Let $a(z),b(z),c(z)$ be pairwise local fields on $V$ then $a(z)_{(n)}b(z), \partial_z a(z),$ $ b(z), c(z)$   $ n \in \mathbbm{Z}$ are also pairwise local fields. 
\end{lemma}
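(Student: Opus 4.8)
The plan is to reduce everything to one core statement and then carry out a standard (if slightly intricate) formal-distribution computation. Locality is a symmetric relation, so it suffices to check the six unordered pairs formed from $\set{a(z)_{(n)}b(z),\ \partial_z a(z),\ b(z),\ c(z)}$. Two observations trim this list. First, a \emph{differentiation lemma}: $(z_1-z_2)^{M}\bk{a(z_1),d(z_2)}=0$ forces $(z_1-z_2)^{M+1}\bk{\partial_{z_1} a(z_1),d(z_2)}=0$, since differentiating $(z_1-z_2)^{M+1}\bk{a(z_1),d(z_2)}=0$ in $z_1$ produces $(M+1)(z_1-z_2)^{M}\bk{a(z_1),d(z_2)}+(z_1-z_2)^{M+1}\bk{\partial_{z_1} a(z_1),d(z_2)}$ whose first summand already vanishes. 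Second, all three pairs involving $a(z)_{(n)}b(z)$ follow from the single assertion
\[
(\star)\qquad a(z),b(z),c(z)\ \text{pairwise local}\ \Longrightarrow\ a(z)_{(n)}b(z)\ \text{and}\ c(z)\ \text{local, for every}\ n\in\mathbbm{Z}.
\]
Indeed $(\star)$, applied to the triples $(a,b,c)$, $(a,b,b)$ and $(a,b,a)$, makes $a(z)_{(n)}b(z)$ local with $c(z)$, with $b(z)$, and with $a(z)$ respectively; the last case together with the differentiation lemma yields locality with $\partial_z a(z)$. The pairs $\partial_z a(z)$ versus $b(z),c(z)$ come from the differentiation lemma applied to $(a,b)$ and $(a,c)$, and $b(z)$ versus $c(z)$ is a hypothesis.

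To prove $(\star)$, recall the residue expression for the $n$-th product of two fields,
\[
a(w)_{(n)}b(w)=\operatorname{Res}_{z}\Big(\iota_{z,w}\!\big((z-w)^{n}\big)\,a(z)b(w)-\iota_{w,z}\!\big((z-w)^{n}\big)\,b(w)a(z)\Big),
\]
where $\iota_{z,w}$ and $\iota_{w,z}$ denote expansion in $|z|>|w|$ and $|w|>|z|$ respectively; for $n\ge 0$ the two expansions coincide and the right side is $\operatorname{Res}_{z}\big((z-w)^{n}\bk{a(z),b(w)}\big)$. A routine estimate shows $a(w)_{(n)}b(w)\in\Field{V}$, and if $(z-w)^{r}\bk{a(z),b(w)}=0$ then $a(z)_{(n)}b(z)=0$ for $n\ge r$, so we may assume $n<r$. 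Fix $r$ so that all three given localities hold at exponent $r$. Taking the commutator of the displayed formula with $c(x)$ (which passes through $\operatorname{Res}_{z}$ and through the $\iota$-coefficients) and expanding each triple commutator by $\bk{XY,Z}=X\bk{Y,Z}+\bk{X,Z}Y$ splits $\bk{a(w)_{(n)}b(w),c(x)}$ into a ``$\bk{b,c}$-part'' $G_{1}$ and an ``$\bk{a,c}$-part'' $G_{2}$. For $G_{1}$ one pulls $(x-w)^{r}$ inside the residue, where it annihilates $\bk{b(w),c(x)}$, so $(x-w)^{r}G_{1}=0$.

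The real content is $G_{2}=\operatorname{Res}_{z}\big(\iota_{z,w}((z-w)^{n})\,\bk{a(z),c(x)}b(w)-\iota_{w,z}((z-w)^{n})\,b(w)\bk{a(z),c(x)}\big)$. Multiply by $(x-w)^{N}=(x-w)^{r}(x-w)^{N-r}$, expand \emph{only} the second factor as $(x-w)^{N-r}=\sum_{j}\binom{N-r}{j}(x-z)^{N-r-j}(z-w)^{j}$, and keep one factor $(x-w)^{r}$ intact. In the resulting sum over $j$: the terms with $N-r-j\ge r$ vanish, since the polynomial $(x-z)^{N-r-j}$ kills $\bk{a(z),c(x)}$; for the remaining terms, where $N-r-j<r$, one chooses $N$ large enough (any $N\ge 3r-n$) that $n+j\ge r$, so $\iota_{z,w}$ and $\iota_{w,z}$ agree on the now-polynomial $(z-w)^{n+j}$, the two residue pieces combine into $\operatorname{Res}_{z}\big((z-w)^{n+j}(x-z)^{N-r-j}\bk{\bk{a(z),c(x)},b(w)}\big)$, and the Jacobi identity rewrites $\bk{\bk{a(z),c(x)},b(w)}=\bk{\bk{a(z),b(w)},c(x)}+\bk{a(z),\bk{c(x),b(w)}}$. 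The first summand dies against $(z-w)^{n+j}$; the second dies against the intact $(x-w)^{r}$, because $a(z)$ involves neither $w$ nor $x$ and $(x-w)^{r}\bk{c(x),b(w)}=0$. Hence $(x-w)^{N}G_{2}=0$, so $(x-w)^{N}\bk{a(w)_{(n)}b(w),c(x)}=0$, which is $(\star)$.

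\textbf{The main obstacle} I expect is organizational rather than conceptual: one must keep careful track of the region ($\iota_{z,w}$ versus $\iota_{w,z}$) in which each series is expanded — the step that combines the two residue pieces is legitimate precisely because $(z-w)^{n+j}$ has become an honest polynomial — and one must choose $N$ uniformly so that negative $n$, where $a(z)_{(n)}b(z)$ is a normally ordered product rather than a polar term, is also covered. This bookkeeping is exactly what is carried out in \cite{kac1998vertex}.
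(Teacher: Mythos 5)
The paper offers no proof of this lemma at all — it is quoted directly from \cite{kac1998vertex} — and your argument is correct: it is essentially the standard proof from that reference, reducing everything to the claim that $a_{(n)}b$ is local with $c$, splitting the commutator into its $[a,c]$- and $[b,c]$-parts, and handling the former via the binomial expansion of $(x-w)^{N-r}$ in powers of $(x-z)$ and $(z-w)$ together with the Jacobi identity. The only cosmetic difference is that you use the residue form of the $n$-th product rather than the paper's formula \eqref{nProductBakalov}; for local fields the two definitions agree, so nothing is lost.
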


The $n$-product of two local fields is defined as 
\begin{equation}		\label{nProductBakalov}
\left( a(z_1)_{(n)} b(z_2) \right)(z) v  = \partial_{z_1}^{(N - 1 - n)} \left. \left( (z_1 - z_2)^N a(z_1)b(z_2)v \right)\right|_{z_1=z_2=z}
\end{equation}
for $v \in V, n < N$, and $\left( a(z_1)_{(n)} b(z_2) \right)(z) v = 0$ if $n \geq N$.\\

\noindent Given and operator $A$ we will use the notation $A^{(k)} = \frac{A^k}{k!}$.\\

Given a field $a(z) = \sum_{n \in \mathbbm{Z}} a_{(n)} z^{-1-n}$ the \emph{annihilation} and \emph{creation} parts of $a(z)$ are defined respectively as:
\[
a(z)_{-} = \sum_{n \geq  0} a_{(n)} z^{-1-n},
\]
\[
a(z)_{+} = \sum_{n \leq -1} a_{(n)} z^{-1-n};
\]
The normally ordered product of two fields $a(z_1), b(z_2)$ is defined by
\[
\NormallyOP{a(z_1)b(z_2)}  = a(z_1)_{+} b(z_2) + b(z_2) a(z_1)_{-}.
\]

\begin{definition}
	A vertex algebra is a vector space $V$, a distinguished vector  $\vacuum \in V$ and linear map
	\[
	Y_z: V \rightarrow \Field{V}, \quad v \mapsto Y(v,z),
	\]
	such that the following axioms are satisfied:\\
	\begin{tabular}{l l}
		(vacuum axiom)			& $Y(\vacuum, z) = \id$, \space	$Y(v,z) \vacuum \in V[\! [z] \!]$, \space $Y(v,z)\vacuum |_{z=0} = v$;\\
		(translation invariance) &		$\bk{T, Y(v, z)} = \partial_z Y(v, z)$;\\
		(locality axiom)		&	For every $v_1,v_2 \in V$ there is $N$ large enough such that
	\end{tabular}
	\[
	\upla{z_1 - z_2}^N \bk{Y_{z_1}(v_1), Y_{z_2}(v_2)} = 0.
	\]
	
	Where the translation endomorphism $T \in \End{V}$ is defined as $Tv = \partial_z Y(v,z) \vacuum |_{z=0}$.
\end{definition}

{\noindent \it Remark:} There are several equivalent approach to define vertex algebra \cite{de2006finite}.

Following \cite{carter2005lie} let $\Gg$ be a Lie algebra with a non degenerate symmetric invariant bilinear form $\innerProd{\cdot}{\cdot}: \Gg \times \Gg \rightarrow \Complexn$,  for instance, every finite dimensional semisimple Lie algebra has such bilinear form.  The Kac-Moody affine Lie algebra $\KacMoody$ is defined as vector space by $\KacMoody = \Gg[t,t^{-1}]\oplus \Complexn K$ with the commutator
\[
\bk{at^m, bt^n} = \bk{a,b}t^{m+n} + m \innerProd{a}{b} \delta_{m,-n} K,
\]
where $K$ is central. Let us introduce the notation $a_n = at^n$.

Consider the subalgebra of the Kac-Moody affine algebra given by $\mathfrak{g}[t] \oplus \Complexn K$ and its one dimensional representation $\Complexn \vacuum$, where $K$ acts by multiplication by a given scalar $k$ and the elements of $\mathfrak{g}[t]$ acts by zero.   

\begin{proposition}\cite{kac1998vertex}	\label{KacsMoodyVertexAlgebra}
	The $\KacMoody$ module 
	\[
	V^k(\Gg) = \Ind{\Gg[t]\oplus \Complexn K}{\KacMoody} \vacuum	 \simeq 	\mathcal{U}\left(\KacMoody \right) \otimes_{\mathcal{U}\left( \Gg[t] \oplus \Complexn K \right)} \Complexn \vacuum
	\]
	has a vertex algebra structure.
\end{proposition}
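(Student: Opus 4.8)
The plan is to invoke the reconstruction (existence) theorem for vertex algebras from \cite{kac1998vertex}: it suffices to produce on $V^k(\Gg)$ a vacuum vector, a translation operator, and a generating family of pairwise local fields compatible with these, after which the theorem yields the unique vertex algebra structure. So the work is to exhibit the generating currents and verify locality, the vacuum conditions, and translation covariance.

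First I would take $\vacuum$ to be the canonical cyclic generator of $V^k(\Gg) = \mathcal{U}(\KacMoody)\otimes_{\mathcal{U}(\Gg[t]\oplus\Complexn K)}\Complexn\vacuum$. By PBW, a basis of $V^k(\Gg)$ is given by the monomials $a^1_{-n_1}\cdots a^r_{-n_r}\vacuum$ with the $a^i$ in a fixed basis of $\Gg$ and $n_1 \ge \cdots \ge n_r \ge 1$. For $a\in\Gg$ set $a(z) = \sum_{n\in\mathbb{Z}} a_n z^{-1-n}$ acting through $\KacMoody$; since $a_n$ strictly lowers the natural filtration for $n\ge 0$, one has $a_n v = 0$ for $n$ large for every $v$, so $a(z)\in\Field{V^k(\Gg)}$. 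Using $\bk{a_m,b_n} = \bk{a,b}_{m+n} + m\innerProd{a}{b}\delta_{m,-n}K$ with $K$ acting by $k$, a direct computation gives
\[
\bk{a(z_1),b(z_2)} = \bk{a,b}(z_2)\,\delta(z_1,z_2) + k\innerProd{a}{b}\,\partial_{z_2}\delta(z_1,z_2),
\]
with $\delta(z_1,z_2)=\sum_{n\in\mathbb{Z}} z_1^n z_2^{-1-n}$; multiplying by $(z_1-z_2)^2$ annihilates both summands, so the currents $\{a(z):a\in\Gg\}$ are pairwise local with $N=2$.

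Next I would introduce $T\in\End{V^k(\Gg)}$ determined by $T\vacuum=0$ and $\bk{T,a_n}=-n\,a_{n-1}$ (well defined since it is $-d/dt$ on $\Gg[t,t^{-1}]$ and annihilates $K$, hence respects the relations of $\KacMoody$), and check $\bk{T,a(z)}=\partial_z a(z)$ together with $a(z)\vacuum = a_{-1}\vacuum + a_{-2}\vacuum\, z + \cdots \in V^k(\Gg)[[z]]$, whose value at $z=0$ is $a_{-1}\vacuum$. One sets $Y(\vacuum,z)=\id$, $Y(a_{-1}\vacuum,z)=a(z)$, and on a general PBW monomial
\[
Y(a^1_{-n_1}\cdots a^r_{-n_r}\vacuum,z) = \NormallyOP{\partial_z^{(n_1-1)}a^1(z)\cdots \partial_z^{(n_r-1)}a^r(z)}
\]
for a fixed nesting of the normally ordered product. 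By Lemma \ref{DongsLemmaVertexAlgebras}, since $\partial_z$ and the $(n)$-products preserve pairwise locality, every field obtained this way is mutually local; and by the vacuum conditions $Y(v,z)\vacuum|_{z=0}=v$, so these fields span a generating set for $V^k(\Gg)$. The existence theorem of \cite{kac1998vertex} then assembles this data into the asserted vertex algebra structure, with state--field correspondence $Y$ and translation operator $T$.

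The main obstacle is the locality estimate for the generating currents: everything else is formal, but one must verify carefully that $\bk{a(z_1),b(z_2)}$ has the displayed form and that $(z_1-z_2)^2$ kills $\partial_{z_2}\delta(z_1,z_2)$ (equivalently $(z_1-z_2)^{j+1}\partial_{z_2}^{(j)}\delta(z_1,z_2)=0$). Once $N=2$ locality of the currents is in hand, Dong's Lemma and the reconstruction theorem do the rest, and the non-degeneracy of $\innerProd{\cdot}{\cdot}$ plays no role beyond making $\KacMoody$ a Lie algebra.
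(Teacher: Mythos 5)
Your proposal is correct and follows essentially the same route the paper relies on: the paper itself offers no proof of this proposition, delegating it to \cite{kac1998vertex}, and merely records the state--field correspondence, which is exactly the assignment $Y(a_{-1}\vacuum,z)=a(z)$ extended to PBW monomials by normally ordered products as in equation (\ref{KacMoody_statefields}). Your verification of $N=2$ locality of the currents, the translation operator $T$ with $\bk{T,a_n}=-na_{n-1}$, and the appeal to Dong's Lemma and the existence theorem is precisely the standard argument from the cited reference, so there is nothing to add.
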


The map $Y_z: V^k(\Gg) \rightarrow \Field{V^k(\Gg)}$ is defined as
\[
Y(a_{-1} \vacuum, z) = \sum_{n \in \mathbbm{Z}} a_n z^{-1-n}, \quad a \in \Gg,
\]
and in general for the generators of $V^k(\Gg)$
\begin{equation}	\label{KacMoody_statefields}
Y(a_{-n_{k_1}}^1 \cdots a_{-n_{k_r}}^r \vacuum, z) = \NormallyOP{\partial_z^{(n_{k_1}-1)} Y(a_{-1}^1 \vacuum, z) \cdots \partial_z^{(n_{k_r}-1)} Y(a_{-1}^r \vacuum, z)} .
\end{equation}

This vertex algebra $V^k(\Gg)$ is known as \emph{the universal affine vertex algebra of level $k$} or as \emph{the Kac-Moody vertex algebra of level $k$}. 

\begin{definition}
	A  module over a vertex algebra $V$ is a vector space $W$ equipped with a linear map $Y_z: V \rightarrow \Field{W}$ such that:
	\begin{itemize}
		\item $Y(\vacuum)	=	\id$	
		\item $Y(a_{(n)}b)	=	Y(a)_{(n)} Y(b)$	for all $n \in \mathbbm{Z}$.
	\end{itemize}
\end{definition}



\section{Logarithmic Quantum Fields and Logarithmic Modules}

\subsection{Logarithmic Fields}
It is convenient to extend the notion of quantum fields defined before to include logarithms, i.e., it is often needed to have the notion of logarithm in the formal theory of fields, in this section the basic results of logarithmic quantum fields will be stated following the ideas developed by Bojko Bakalov in \cite{bakalov2016twisted}. Let us start by introducing the formal variable $\LOGz$ which intuitively can be thought as the logarithm of $z$. Since there are now two formal variables we have two possible derivations 
\[
D_z = \partial_z + z^{-1} \partial_{\LOGz},	\quad	\quad	D_{\LOGz} =	z\partial_z	+	\partial_{\LOGz}.
\]
Notice that here we are using the derivatives $D_z$ and $D_{\LOGz}$ instead of $\partial_z$ and $\partial_{\LOGz}$ because the former derivations carry formally the data coded in the analytic equation "$\partial_z \LOGz = \frac{1}{z}$" while the latter derivations do not.

Let $W$ be a vector space over $\Complexn$, let $\alpha \in \Complexn/ \mathbbm{Z}$ and define 
\[
{\bf LField}_{\alpha}(W)	=	\Hom{W, W[\LOGz] [\![z]\!]z^{-\alpha}},
\]
the space of logarithmic quantum fields on $W$ is defined to be
\[
\Lfield{W}	=	\bigoplus_{\alpha \in \Complexn / \mathbbm{Z}}	{\bf LField}_{\alpha}(W) .
\]

\noindent The logarithmic fields will be denoted as $a(z)$ instead of $a(\LOGz, z)$ when no confusion arise.

\begin{definition}
	Two logarithmic fields $a(z_1)$, $b(z_2)$ are local if for $N >> 0$ holds:
	\begin{equation}	\label{LocallityLogModules}
	(z_1 - z_2)^N \bk{a(z_1), b(z_2)} 	=	0.
	\end{equation}
\end{definition}

\begin{definition}
	The $n$-product of two local logarithmic fields $a(z_1)$ and $b(z_2)$ is defined as
	\begin{equation}	\label{nProductBakalovLogFields}
	\left(a(z_1)_{(n)} b(z_2) \right) (z) w	=	D_{z_1}^{(N-n-1)} \left. \left((z_1 - z_2)^N a(z_1)b(z_2)w \right)\right|_{z_1 = z_2 = z}
	\end{equation}
	for $w \in W$ and $n < N$. For $n \geq N$ the $n$-product is defined by $\left(a(z_1)_{(n)} b(z_2) \right) = 0$.
\end{definition}

\noindent {\it Remark:} Even when the expression $\LOGz$ is a formal variable we can define formally
\begin{eqnarray}
\LOG{xy}			&	=	&	\LOG{x}		+	\LOG{y},	\\
\LOG{\frac{x}{y}}	&	=	&	\LOG{x}		-	\LOG{y},	\\
\LOG{1-x}			&	=	&	-	\sum_{n > 0}	\frac{x^n}{n},
\end{eqnarray}
therefore the expression $\LOG{z_1-z_2}$ may be interpreted in the following way: 
\[
\LOG{z_1 - z_2}		=	\LOG{z_1}	+	\LOG{1	-	\frac{z_2}{z_1}}	=	\LOG{z_1}	-	\sum_{n > 0}	\frac{z_1^{-n} z_2^n}{n}.
\]

It is easy to derive the following properties from the Leibniz rule
\begin{eqnarray}
( D_z a )_{(n)} b							&	=	&	-n a_{(n-1)}b,	\label{DerivativeNProduct}	\\
D_z \left( a _{(n)} b\right)				&	=	&	( D_z a )_{(n)} b	+	a_{(n)} ( D_z b ),	\\
\left( \partial_{\LOGz} a_{(n)} b \right)	&	=	&	\left( \partial_{\LOGz} a\right)_{(n)} b	+	a_{(n)} \left( \partial_{\LOGz} b \right).
\end{eqnarray}

Once again there is a Dong's Lemma for logarithmic fields:

\begin{lemma}\label{DongsLemmaLogModules}
	Let $a(z),b(z),c(z)$ be pairwise local logarithmic fields then \\
	\begin{center}		
		\begin{tabular}{c l}	
			{\bf a.}	&	$a(z)_{(n)}b(z)$ and $c(z)$ are local fields for all $n \in \mathbbm{Z}$,	\\
			{\bf b.}	&	$D_z a(z)$, $b(z)$ and $D_{\LOGz} a(z)$	are pairwise local.	
		\end{tabular}
	\end{center}
\end{lemma}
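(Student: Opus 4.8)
The plan is to prove Lemma~\ref{DongsLemmaLogModules} by adapting the classical proof of Dong's Lemma (Lemma~\ref{DongsLemmaVertexAlgebras}) to the logarithmic setting, where the essential new feature is that the derivation $D_z$ plays the role of $\partial_z$ but does not commute with multiplication by powers of $z$ in the naive way, and that $D_{\LOGz}$ is a second, independent derivation. First I would establish the key algebraic identities: that $D_z$ and $D_{\LOGz}$ are commuting derivations of $\Complexn[\LOGz][\![z^{\pm}]\!]$, that $(z_1-z_2)^N$ annihilates a product of logarithmic fields whenever it annihilates the corresponding commutator for $N$ large (so the $n$-product \eqref{nProductBakalovLogFields} is well defined), and formulas \eqref{DerivativeNProduct}--\eqref{DerivativeNProduct} together with their $D_{\LOGz}$-analogue. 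Part {\bf b} should then follow almost formally: if $a(z_1)$ and $b(z_2)$ are local with $(z_1-z_2)^N[a(z_1),b(z_2)]=0$, apply $D_{z_1}$ (resp.\ $D_{\LOGz,z_1}$) to this identity; since $D_{z_1}$ is a derivation commuting with the action on $W$, one gets $(z_1-z_2)^N[D_{z_1}a(z_1),b(z_2)] = -\,D_{z_1}\!\big((z_1-z_2)^N\big)\,[a(z_1),b(z_2)]$, and because $D_{z_1}((z_1-z_2)^N)$ is again divisible by $(z_1-z_2)^{N-1}$ one obtains $(z_1-z_2)^{N+1}[D_{z_1}a(z_1),b(z_2)]=0$; the same computation with $D_{\LOGz}$ uses that $D_{\LOGz,z_1}(z_1-z_2)^N = z_1\partial_{z_1}(z_1-z_2)^N$ is divisible by $(z_1-z_2)^{N-1}$ as well. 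Thus $D_za(z)$, $D_{\LOGz}a(z)$, $b(z)$, $c(z)$ remain pairwise local.

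For part {\bf a}, which is the substantive part, I would follow the standard route: fix $N$ large enough that $(z_1-z_2)^N$ kills all three pairwise commutators $[a,b],[a,c],[b,c]$, and examine $(z_1-z_3)^M\big[(a(z_1)_{(n)}b(z_1)),c(z_3)\big]$ for suitable $M$. Substituting the definition \eqref{nProductBakalovLogFields}, this becomes a residue/coefficient extraction from $(z_1-z_3)^M D_{z_1}^{(N-n-1)}\big((z_1-z_2)^N a(z_1)b(z_2)c(z_3)\big)\big|_{z_1=z_2}$ minus the term with $c$ on the left. One splits $(z_1-z_3)^M = \sum_j \binom{M}{j}(z_1-z_2)^j(z_2-z_3)^{M-j}$ (valid as a polynomial identity even with the $\log$ variables present, since this lives in the coefficient ring), and chooses $M$ so that in every summand either the power of $(z_1-z_2)$ is high enough — combined with the $(z_1-z_2)^N$ already present and the Leibniz expansion of $D_{z_1}^{(N-n-1)}$ — to kill $[a(z_1),c(z_3)]$ after the derivatives have lowered the exponent by at most $N-n-1$, or the power of $(z_2-z_3)$ is high enough to kill $[b(z_2),c(z_3)]$. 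Concretely $M = 2N - n - 1$ (or any sufficiently large value) works: in each term the surviving exponent of $(z_1-z_2)$ after applying $D_{z_1}^{(N-n-1)}$ is at least $N+j-(N-n-1) = j+n+1$, so either $j$ is large enough that this exceeds $N$, or $M-j$ is large, and one plays these off against each other. The point is that $D_{z_1}$ acts only on the $z_1$-dependence (including the $\LOGz_1$ inside $a(z_1)$), so the locality exponents in $z_2-z_3$ are untouched by it, and the logarithmic terms ride along without affecting the divisibility bookkeeping.

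The main obstacle I anticipate is bookkeeping rather than conceptual: one must be careful that the Leibniz expansion of $D_{z_1}^{(N-n-1)}$ applied to the product $(z_1-z_2)^N a(z_1) b(z_2)c(z_3)$ distributes $D_{z_1}$ both onto the polynomial prefactor $(z_1-z_2)^N$ and onto $a(z_1)$ — and that each application of $D_{z_1}$ to $(z_1-z_2)^N$ lowers its exponent by exactly one (this is where the hypothesis ``$\partial_z\LOGz = z^{-1}$'' encoded in $D_z$ matters, since $D_{z_1}(z_1-z_2)^N = N(z_1-z_2)^{N-1}$ just as for $\partial_{z_1}$, because $(z_1-z_2)^N$ contains no $\LOGz_1$). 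After the substitution $z_1 = z_2$, only the terms where all $N-n-1$ derivatives hit $a(z_1)$ (and none the vanishing prefactor, or the surviving prefactor power is nonnegative) contribute, and one tracks the minimal surviving exponent. I would also need to note that the result $a(z)_{(n)}b(z)$ indeed lies in $\Lfield{W}$ with a well-defined ${\Complexn}/{\mathbbm{Z}}$-grading (the grading adds), and that the argument is symmetric enough that the roles of the three fields can be permuted to get full pairwise locality; this is routine once the divisibility estimate above is in place.
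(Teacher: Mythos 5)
Your proposal is correct in substance but takes a genuinely different, and much more self-contained, route than the paper. The paper's entire proof is two lines: part \textbf{a} is delegated to \cite{bakalov2016twisted}, and part \textbf{b} is then \emph{deduced from} part \textbf{a} via the identities $D_z a(z) = a(z)_{(-2)}\id$ and $D_{\LOGz} = zD_z$ (so $D_z a$ is itself an $n$-product of fields local to $b$ and $c$, and multiplying a field by $z$ preserves locality). You instead prove part \textbf{b} directly, by applying $D_{z_1}$ (resp.\ $D_{\LOGz,z_1}$) to $(z_1-z_2)^N\bk{a(z_1),b(z_2)}=0$ and using that $D_{z_1}(z_1-z_2)^N = N(z_1-z_2)^{N-1}$ because the prefactor contains no $\LOG{z_1}$; this is correct, costs one extra power of $(z_1-z_2)$, and has the merit of not depending on part \textbf{a} at all. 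For part \textbf{a} you reconstruct the standard Dong-type argument that the paper outsources (binomial split of the auxiliary factor, Leibniz expansion of $D_{z_1}^{(N-n-1)}$, and the observation that the logarithmic variables ride along harmlessly since multiplication by $(z_i-z_j)^k$ commutes with everything); this is indeed the content of Bakalov's proof, so you are supplying an argument the paper merely cites. The one place your bookkeeping is thin is the term $\bk{a(z_1),c(z_3)}b(z_2)w$ in the range of $j$ where the power of $(z_1-z_3)$ is too small: there it does not suffice for the surviving exponent of $(z_1-z_2)$ to be positive, it must reach $N+1$, so that $(z_1-z_2)^N$ times the triple product is already regular at $z_1=z_2$ (this uses the locality of $a$ and $b$ a second time, inside the product with $c$) and the extra factor forces vanishing upon substitution; with ``short'' read this way your estimate $M=2N-n-1$ does close the argument, so this is a matter of precision rather than a gap.
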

\begin{proof}
	The part {\bf a} is proven in \cite{bakalov2016twisted}, for the part  {\bf b}  just notice that $D_z a(z) = a(z)_{(-2)} \id$,  $D_{\LOGz} = zD_z$ then use part {\bf a}.
\end{proof}

In order to define the normally ordered product for logarithmic fields some extra step is required, for $\alpha \in \Complexn / \mathbbm{Z}$ select a representative $\alpha_0$ such that $-1 < Re (\alpha_0) \leq 0$ then any element $a(z) \in {\bf LField}_{\alpha}(W)$ can be uniquely expressed as 
\[
a(z)	=	\sum_{n \in \mathbbm{Z}} a_n \left( \LOGz \right) z^{-n-\alpha_0},
\]
where for every $w \in W$ holds $a_n \left( \LOGz \right) w = 0$ for $n >> 0$. The annihilation and creation parts of $a(z)$ are defined:
\[
a(z)_{-}	=	\sum_{n \geq 1} a_n \left( \LOGz \right) z^{-n-\alpha_0},
\]
\[
a(z)_{+}	=	\sum_{n \leq 0} a_n \left( \LOGz \right) z^{-n-\alpha_0},
\]
and this concepts can be extended linearly to $\Lfield{W}$; then the normally ordered product of logarithmic fields is defined by the usual formula
\[
\NormallyOP{a(z_1)b(z_2)}  = a(z_1)_{+} b(z_2) + b(z_2) a(z_1)_{-}.
\]
The \emph{propagator} of two logarithmic fields $a(z_1)$, $b(z_2)$ is defined as
\[
P(a, b; z_1, z_2) = \bk{a(z_1)_{-}, b(z_2)} = a(z_1)b(z_2) - :a(z_1)b(z_2):.
\]
The propagator can be used to compute the $n$-products \cite{bakalov2016twisted}:
\begin{proposition}
	If $a(z_1)$, $b(z_2)$	are local logarithmic fields then the $n$-product for $n \geq 0$ can be computed by the formula
	\[
	\left(a(z_1)_{(n)} b(z_2) \right) (z) w	=	D_{z_1}^{(N-n-1)} \left. \left((z_1 - z_2)^N P(a, b; z_1, z_2)w \right)\right|_{z_1 = z_2 = z}
	\]
	where $N$ is large enough such that the equation \ref{LocallityLogModules} holds and $n < N$. 
\end{proposition}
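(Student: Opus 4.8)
The plan is to reduce the identity to the single fact that the \emph{normally ordered} part of the operator product contributes nothing to the non-negative products. From the definition of the propagator one has, as bilocal expressions acting on $w$,
\[
a(z_1)b(z_2)w = \NormallyOP{a(z_1)b(z_2)}w + P(a,b;z_1,z_2)w,
\]
and the right-hand side of \ref{nProductBakalovLogFields} is linear in this bilocal argument. Hence it suffices to prove that for $n\geq 0$, i.e. for $N-n-1<N$,
\[
D_{z_1}^{(N-n-1)}\left.\left((z_1-z_2)^N\NormallyOP{a(z_1)b(z_2)}w\right)\right|_{z_1=z_2=z}=0 .
\]

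The heart of the matter is that $\NormallyOP{a(z_1)b(z_2)}w$ has no negative powers of $z_1-z_2$. I would verify this by substituting $z_1=z_2+u$ and checking term by term that one obtains a formal power series in $u$. For the summand $a(z_1)_{+}b(z_2)w$: fixing the representative $\beta_0$, the vector $b(z_2)w$ lies in $W[\log z_2]((z_2))z_2^{-\beta_0}$, and $a(z_1)_{+}$ involves only the powers $z_1^{k-\alpha_0}$ with $k\geq 0$; writing $z_1^{k-\alpha_0}=z_2^{-\alpha_0}(z_2+u)^k(1+u/z_2)^{-\alpha_0}$ and expanding in $u$ exhibits $a(z_1)_{+}b(z_2)w$ as a power series in $u$, and collecting the coefficient of a fixed power of $u$ involves only finitely many summands because the negative powers of $z_2$ appearing in $b(z_2)w$ are bounded in terms of $b$ and $w$ alone. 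For the summand coming from $a(z_1)_{-}w=\sum_{m\geq 1}a_m(\log z_1)z_1^{-m-\alpha_0}w$: this is a \emph{finite} sum since $a$ is a logarithmic field, so $b(z_2)a(z_1)_{-}w$ is a finite sum of terms $z_1^{-m-\alpha_0}\cdot(\text{a polynomial in }\log z_1)\cdot(\text{an element of }W[\log z_1,\log z_2]((z_2))z_2^{-\beta_0})$, and since $z_2$ is invertible $(z_2+u)^{-m-\alpha_0}$ again expands as a power series in $u$. Thus $\NormallyOP{a(z_1)b(z_2)}w$ becomes a formal power series $G$ in $u$ whose coefficients are logarithmic expressions in $z_2$.

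Granting this, the conclusion is immediate. The series $(z_1-z_2)^N\NormallyOP{a(z_1)b(z_2)}w=u^N G$ has a zero of order $\geq N$ in $u$; and in the coordinates $(u,z_2)$ we have $D_{z_1}=\partial_u+(z_2+u)^{-1}\partial_{\log z_1}$, where $(z_2+u)^{-1}$ is itself a power series in $u$, so each application of $D_{z_1}$ lowers the order of vanishing in $u$ by at most one. Hence $D_{z_1}^{(N-n-1)}(u^N G)$ still vanishes to order $\geq n+1\geq 1$ in $u$, and the substitution $z_1=z_2=z$ — in particular $u=0$ — annihilates it. Feeding this back into \ref{nProductBakalovLogFields} yields the propagator formula for $(a(z_1)_{(n)}b(z_2))(z)w$ for every $n\geq 0$.

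I expect the only real work to be the bookkeeping in the second step: rigorously justifying the rearrangement of the a priori infinite sum defining $a(z_1)_{+}b(z_2)w$ upon re-expansion in powers of $u$, and confirming that $D_{z_1}$, $D_{\log z}$ and the evaluation $|_{z_1=z_2=z}$ are compatible with these manipulations. There is no conceptual obstacle beyond the familiar principle that only the singular part of the operator product — here encoded in the propagator — enters the non-negative products; locality is used merely to guarantee that $N$ may be chosen as in \ref{LocallityLogModules}, so that the left-hand side of \ref{nProductBakalovLogFields} does not depend on the choice.
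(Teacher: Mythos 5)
Your argument is correct. The paper offers no proof of this proposition --- it is quoted directly from \cite{bakalov2016twisted} --- and your argument is the standard one found in that reference: split $a(z_1)b(z_2)w$ into $\NormallyOP{a(z_1)b(z_2)}w$ plus the propagator, check that the normally ordered part re-expands as a formal power series in $u=z_1-z_2$, and count $u$-adic orders of vanishing to conclude that $D_{z_1}^{(N-n-1)}\left(u^N G\right)$ still vanishes at $u=0$ when $n\geq 0$. The one imprecision is harmless: the coefficient of a fixed power of $u$ in $a(z_1)_{+}b(z_2)w$ is not a finite sum but an element of $W[\LOG{z_1},\LOG{z_2}](\!(z_2)\!)\,z_2^{-\alpha_0-\beta_0}$; what is finite is the set of summands contributing to each individual monomial in $u$, $z_2$ and the logarithms, and this is exactly what the uniform lower bound on the powers of $z_2$ in $b(z_2)w$ guarantees.
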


\subsection{Logarithmic Modules} Logarithmic modules over vertex algebras are a generalization of the usual notion of modules over vertex algebras but allowing logarithmic quantum fields, formally we define:

\begin{definition}
	A logarithmic module over a vertex algebra $V$ is a vector space $W$ equipped with a linear map $Y_z: V \rightarrow \Lfield{W}$ such that:
	\begin{itemize}
		\item $Y(\vacuum)	=	\id$	
		\item $Y(a_{(n)}b)	=	Y(a)_{(n)} Y(b)$	for all $n \in \mathbbm{Z}$.
	\end{itemize}
\end{definition}

Moreover, if $V$ is a vertex algebra equipped with an automorphism $\varphi$ and $W$ is a logarithmic module over $V$ such that $Y(\varphi a) = e^{2\pi i D_{\LOGz}} Y(a)$ holds for every $a \in V$, then $W$ is called a \emph{$\varphi$-twisted logarithmic module} \cite{bakalov2016twisted}.

Let $W$ be a vector space and let $\mathcal{W} \subseteq \Lfield{W}$ be a collection of logarithmic fields which are pairwise local, denote by $\overline{\mathcal{W}}$ the smallest $\Complexn[D_{\LOGz}]$ submodule of $\Lfield{W}$ containing $\mathcal{W} \cup \set{\id}$ and closed under $n$-products; then, because of Proposition \ref{DongsLemmaLogModules}, $\overline{\mathcal{W}}$ is again a collection of pairwise local logarithmic fields.

\begin{theorem}[Bakalov] \label{BakalovTheorem}
	Let $W$ be a vector space and $\overline{\mathcal{W}}$ be defined as above. Then $\overline{\mathcal{W}}$ with the $n$-product of logarithmic fields has the structure of vertex algebra where the vacuum vector is $\id$ and the translation operator is $D_z$.
\end{theorem}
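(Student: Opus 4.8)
The plan is to verify directly the three vertex algebra axioms—vacuum, translation invariance, and locality—for the quadruple $(\overline{\mathcal{W}}, \id, Y, D_z)$, where $Y(a(z), w) := a(z)_{(\bullet)} w$ is given by the $n$-product of logarithmic fields. First I would set up the state-field correspondence: to a logarithmic field $a(z) \in \overline{\mathcal{W}}$ I associate the field on $\overline{\mathcal{W}}$ whose coefficients are the operators $b(z) \mapsto a(z)_{(n)} b(z)$; this lands in $\Field{\overline{\mathcal{W}}}$ because by the very definition of the $n$-product (equation \ref{nProductBakalovLogFields}), $a(z)_{(n)} b(z) = 0$ for $n \geq N$ with $N$ a locality bound for the pair, so only finitely many negative-power coefficients act nontrivially on any given $b(z)$; here one must be slightly careful that the truncation bound $N$ can be taken uniformly, which follows since $\overline{\mathcal{W}}$ consists of pairwise local fields by Lemma \ref{DongsLemmaLogModules}. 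The closure of $\overline{\mathcal{W}}$ under $n$-products and under $D_z$ (needed because $D_z a(z) = a(z)_{(-2)}\id$) is built into its definition, so $Y$ is well-defined as a map $\overline{\mathcal{W}} \to \Lfield{\overline{\mathcal{W}}}$.

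Next I would dispatch the easy axioms. For the vacuum axiom: $\id_{(n)} b(z) = 0$ for $n \geq 0$ (locality bound $N=0$ works for $\id$ against anything, or rather $N=1$ and a short computation), $\id_{(-1)} b(z) = b(z)$, and $\id_{(n)} b(z) = 0$ for $n < -1$, which is exactly $Y(\id, z) = \id$ together with $Y(b(z), z)\id \in \overline{\mathcal{W}}[\![z]\!]$ and $Y(b(z),z)\id|_{z=0} = b(z)$; these are the standard "Taylor expansion" identities for $n$-products and I would verify them from \ref{nProductBakalovLogFields} directly, using that $D_{z_1}^{(k)}$ applied to $(z_1-z_2)^N b(z_1)$ and restricted to the diagonal reproduces the Taylor coefficients of $b$. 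For translation invariance I would use the identities \ref{DerivativeNProduct} already recorded in the excerpt, namely $(D_z a)_{(n)} b = -n\, a_{(n-1)} b$ and $D_z(a_{(n)} b) = (D_z a)_{(n)} b + a_{(n)}(D_z b)$: combining them gives $[D_z, Y(a(z),\cdot)]$ acting on the $n$-th coefficient as $D_z(a_{(n)} b) - a_{(n)}(D_z b) = (D_z a)_{(n)} b = -n\, a_{(n-1)} b$, which matches $\partial_z Y(a(z),\cdot)$ coefficient-by-coefficient once one recalls the convention $a(z) = \sum a_{(n)} z^{-1-n}$. One also checks $D_z$ is the canonical translation operator, i.e. $D_z b(z) = Y(b(z),z)\id|$ differentiated at $0$, which again is \ref{DerivativeNProduct}.

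The main obstacle, and the genuinely substantial part, is the \textbf{locality axiom}: given $a(z), b(z) \in \overline{\mathcal{W}}$ one must produce $M \gg 0$ with $(z_1 - z_2)^M [Y(a(z),z_1), Y(b(z),z_2)] = 0$ as operators on $\overline{\mathcal{W}}$. Since $Y(a(z),z_1)$ and $Y(b(z),z_2)$ are themselves built from $n$-products of the original fields, this is precisely the content of the logarithmic Dong lemma (Lemma \ref{DongsLemmaLogModules}(a)): $a(z)_{(n)} b(z)$ is again local with respect to any $c(z) \in \overline{\mathcal{W}}$, and iterating shows every element of $\overline{\mathcal{W}}$ is pairwise local. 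The delicate point—where the logarithmic setting differs from the classical one and why the argument is deferred to \cite{bakalov2016twisted}—is that the presence of $\LOGz$ and the use of $D_{z_1}$ rather than $\partial_{z_1}$ in \ref{nProductBakalovLogFields} requires re-running the classical Dong-lemma contour/residue argument while tracking the extra $z^{-1}\partial_{\LOGz}$ term; but since the statement of part (a) is quoted from Bakalov, I would simply invoke it. With pairwise locality of all of $\overline{\mathcal{W}}$ in hand, the associativity/commutator identities for $n$-products of mutually local logarithmic fields (again from \cite{bakalov2016twisted}) give the Borcherds-type identity, from which the locality axiom for $Y$ follows formally. This completes the verification that $\overline{\mathcal{W}}$ is a vertex algebra with vacuum $\id$ and translation $D_z$. $\qed$
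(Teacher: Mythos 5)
The paper offers no proof of this theorem at all: it is imported verbatim from \cite{bakalov2016twisted} (it is the logarithmic analogue of the classical ``existence theorem'' for local collections of fields), so there is nothing to compare your argument against line by line. Your sketch follows the standard template for that theorem and is essentially sound where it is explicit. The vacuum check ($\id_{(n)} b = \delta_{n,-1}\, b$, and $b_{(-n-1)}\id = D_z^{(n)} b$ so that $Y(b,\zeta)\id$ is a power series with constant term $b$) and the translation check (combining $(D_z a)_{(n)} b = -n\, a_{(n-1)} b$ with the Leibniz rule for $D_z$ over $n$-products to get $[D_z, Y(a,\zeta)] = \partial_\zeta Y(a,\zeta)$) are exactly right. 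One small point: your worry about taking the truncation bound $N$ ``uniformly'' is unnecessary --- the field condition only requires, for each fixed $b$, that $a_{(n)} b = 0$ for $n \gg 0$, and the pair-dependent locality bound already gives this.

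The one place where your write-up blurs an important distinction is the locality axiom. Dong's Lemma (Lemma \ref{DongsLemmaLogModules}) gives you that all elements of $\overline{\mathcal{W}}$ are pairwise local \emph{as logarithmic fields on $W$}; it does not by itself give locality of the state-fields $Y(a,\zeta_1)$ and $Y(b,\zeta_2)$ \emph{as fields on the space $\overline{\mathcal{W}}$}, which is what the vertex algebra axiom demands. Your opening sentence of that paragraph (``this is precisely the content of the logarithmic Dong lemma'') overstates what the lemma delivers. The genuine content is the commutator formula $[a_{(m)}, b_{(n)}] = \sum_{j \ge 0} \binom{m}{j} (a_{(j)} b)_{(m+n-j)}$ for $n$-products of mutually local logarithmic fields, from which locality of the $Y$-fields follows; this is the hard analytic step in Bakalov's paper, complicated in the logarithmic setting by the $z^{-1}\partial_{\LOGz}$ term in $D_z$. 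You do correctly identify and cite this in your closing sentences, so the argument closes, but be aware that the citation there is carrying the entire weight of the theorem --- which is consistent with the paper's own treatment, since it attributes the result wholesale to Bakalov.
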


From this it becomes clear that $W$ is a logarithmic module over $\overline{\mathcal{W}}$, just take the map $Y: \overline{\mathcal{W}} \rightarrow \Lfield{W}$ to be the inclusion map; moreover, it is $e^{2\pi i D_{\LOGz}} $-twisted module. 	

\begin{corollary}
	Let $V$ be a vertex algebra and $W$ a vector space, then giving a logarithmic $V$-module structure on $W$ is equivalent to give a vertex algebra morphism $V \rightarrow \overline{\mathcal{W}}$ for a local collection of logarithmic fields $\mathcal{W} \subseteq \Lfield{W}$, moreover, if $V$ is equipped with an automorphism $\varphi$ then the module will be twisted if and only the associated vertex algebra morphism transforms $\varphi$ into $e^{2\pi i D_{\LOGz}} $.
\end{corollary}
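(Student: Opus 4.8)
The plan is to read the Corollary off from Theorem~\ref{BakalovTheorem} and the remark immediately following it, by unwinding the two definitions involved; no new construction is needed, only a translation between ``logarithmic module structure'' and ``vertex algebra morphism''. I would prove the equivalence in both directions and then add the twisted refinement.

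First, given a pairwise local collection $\mathcal{W} \subseteq \Lfield{W}$ and a vertex algebra morphism $\rho : V \to \overline{\mathcal{W}}$, I would compose $\rho$ with the inclusion $\overline{\mathcal{W}} \hookrightarrow \Lfield{W}$ to obtain a linear map $Y_z : V \to \Lfield{W}$. Since $\rho$ sends the vacuum of $V$ to the vacuum $\id$ of $\overline{\mathcal{W}}$ one has $Y(\vacuum) = \id$, and since $\rho$ preserves all $n$-products while, by Theorem~\ref{BakalovTheorem}, the $n$-product on $\overline{\mathcal{W}}$ is exactly the $n$-product \eqref{nProductBakalovLogFields} of logarithmic fields, one gets $Y(a_{(n)}b) = Y(a)_{(n)} Y(b)$ for every $n \in \mathbbm{Z}$; hence $W$ is a logarithmic $V$-module. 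Conversely, starting from a logarithmic $V$-module structure $Y_z : V \to \Lfield{W}$, I would set $\mathcal{W} := Y_z(V)$; this is pairwise local because the module axiom $Y(a_{(n)}b) = Y(a)_{(n)} Y(b)$ already presupposes that the right-hand $n$-product is defined, which requires $Y(a)$ and $Y(b)$ to be local. Then $\overline{\mathcal{W}}$ is a vertex algebra by Theorem~\ref{BakalovTheorem}, the corestriction $\rho : V \to \overline{\mathcal{W}}$, $v \mapsto Y(v)$, is well defined, it sends $\vacuum$ to $\id$, and it respects $n$-products by the module axiom. To see that $\rho$ is a vertex algebra morphism it remains to check compatibility with translation: in $V$ one has $Tv = v_{(-2)}\vacuum$, so $\rho(Tv) = Y(v)_{(-2)}\id = D_z Y(v)$, using the identity $D_z a(z) = a(z)_{(-2)}\id$ recorded above and the fact that $D_z$ is the translation operator of $\overline{\mathcal{W}}$. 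The two assignments are mutually inverse once one agrees that the auxiliary collection may always be taken to be $Y_z(V)$, which yields the asserted equivalence.

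For the twisted statement I would use that, by Theorem~\ref{BakalovTheorem} and the remark after it, $e^{2\pi i D_{\LOGz}}$ is an automorphism of $\overline{\mathcal{W}}$ (it commutes with the $\Complexn[D_{\LOGz}]$-action and, via $D_{\LOGz}=zD_z$ together with \eqref{DerivativeNProduct}, is compatible with $n$-products). Under the correspondence above $Y_z$ is the composite of $\rho$ with the inclusion, so the condition $Y(\varphi a) = e^{2\pi i D_{\LOGz}} Y(a)$ for all $a \in V$ is literally the identity $\rho \circ \varphi = e^{2\pi i D_{\LOGz}} \circ \rho$ of maps $V \to \overline{\mathcal{W}}$, i.e.\ the statement that $\rho$ intertwines $\varphi$ with $e^{2\pi i D_{\LOGz}}$; this gives the last assertion.

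The argument is essentially bookkeeping, so the only point requiring care --- and the one I expect to be the main obstacle in writing it cleanly --- is the looseness of the phrase ``for a local collection $\mathcal{W}$'': the collection is auxiliary data, not part of a canonical bijection, so the equivalence must be phrased as the \emph{existence} of such a $\mathcal{W}$ together with a morphism, and one must check that enlarging $\mathcal{W}$ to a bigger pairwise local collection changes neither the morphism relevant to $W$ nor the induced module. The substantive input behind everything is the identification, supplied by Theorem~\ref{BakalovTheorem}, of the abstract $n$-product on $\overline{\mathcal{W}}$ with the concrete $n$-product of logarithmic fields, together with the translation identity $D_z a = a_{(-2)}\id$.
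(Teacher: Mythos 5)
Your proposal is correct and follows exactly the route the paper intends: the corollary is presented as an immediate consequence of Theorem~\ref{BakalovTheorem} and the remark that $W$ is a logarithmic $\overline{\mathcal{W}}$-module via the inclusion, and your unwinding (compose a morphism with the inclusion in one direction; take $\mathcal{W}=Y_z(V)$ and corestrict in the other, with translation compatibility supplied by $Tv=v_{(-2)}\vacuum$ and $D_z a = a_{(-2)}\id$) is precisely the bookkeeping the paper leaves implicit. Your observations about locality of $Y_z(V)$ being presupposed by the module axiom and about the auxiliary collection being existential rather than canonical are sound and consistent with the paper's conventions.
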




\section{Functions on the Double Twisted Torus}
\label{section2.01}

\subsection{Double twisted torus}
Let $V$ be a 3-dimensional real vector space and consider $G$ the extension 
\[
\xymatrix{0 \ar[r] & \wedge^2 V \ar[r] & G \ar[r] & V \ar[r] & 0}
\]
with internal law
\[
\upla{v, \zeta}\upla{v', \zeta'}  = \upla{v+v', \zeta + \zeta'+ v\wedge v'}, \quad v,v'\in V, \zeta,\zeta'\in \wedge^2 V
\]
making $G$ into a group. Using a coordinate system $\set{x^i,x^*_i}, \quad i=1,2,3$, where $\set{x^i}$ are the coordinates on the canonical basis $\set{e^i}$ of $V$ and $\set{x^*_i}$ are coordinates on the basis $\set{e^*_i = \Eijk e^j \wedge e^k}$ of $\wedge^2 V$, this product translates as
\[
\upla{x^i,x^*_i}\upla{y^i,y^*_i} = \upla{x^i + y^i, x^*_i + y^*_i + \frac{1}{2} \Eijk x^jy^k},
\]
where $\Eijk$ denotes the totally antisymmetric tensor. The \emph{double twisted torus $Y$} is defined as the quotient of $G$ modulo the subgroup $\Gamma$ generated by $e^i,  i=1,2,3$ the standard basis of $V\simeq \mathbbm{R}^3$. The tangent bundle TY is trivialized by the left invariant vector fields of $G$: 
\[
\alpha^i = \partial_{x^*_i}, \quad \beta_i = \partial_{x^i} - \frac{1}{2} \Eijk x^j \partial_{x^*_k} , 
\]
being $\bk{\beta_i, \beta_j} = \Eijk \alpha^k$ the only non trivial commutators, therefore they span a Lie algebra $\Gg$; moreover this Lie algebra is equipped with a non degenerate symmetric invariant bilinear form 
\[
\innerProd{\beta_i}{\alpha^j} = \delta_{i,j}.
\]

Now consider the space of polynomials $\Complexn[x^i, x^*_i] $ which is a $\Gg$-module via the restriction of the action on $C^\infty(G)$, let $\KacMoody = \Gg[t,t^{-1}]\oplus \Complexn K$ be the affine Kac-Moody Lie algebra associated to $\Gg$ and extend the action for the elements $a_n = at^n$, $a \in \Gg$, $n \geq 1$ by zero and make $K$ act as the identity, then define the $\KacMoody$-module
\begin{equation}\label{Hdefinition}
\Hf = \Ind{\Gg[t] \oplus \Complexn K}{ \KacMoody} \Complexn[x_i, x^*_i].
\end{equation}

Notice that $\Hf$ has naturally the structure of $V^1(\Gg)$-module.\\

Let's start by defining some operators on $\Hf$ that will be useful later, particularly when we  try to fit $\Hf$ into an algebraic structure:\\

Define the operators $x^i_n := - \frac{1}{n} \alpha^i_n$ for $n \neq 0$, note that those operators commute with each other, define $x^i_0$ acting on an element of $\Complexn[x^i,x^*_i]$ as $f \mapsto x^i f$, impose that $\bk{x^j_n, x^i_0} = 0$ and $\bk{\beta_{j,n}, x^i_0} = \delta_{i,j} \delta_{n,0} K$; therefore $x^i_0$ can be extended to $\Hf$.\\

Define the operators $W^i := \alpha^i_0$ on $\Hf$ for $i=1,2,3$ and 
\[
P_i := \beta_{i,0} + \Eijk x^j_0 W^k - \frac{1}{2} \Eijk \sum_m m x^j_{-m} x^k_m,
\]
note that since all the $\alpha^i_n$ commute with each other then all the $x^i_n$ and $W^j$ commute with each other. Also define the operators 
\[
x^*_{i,n} := - \frac{1}{n} \left( \beta_{i,n} + \Eijk x^j_n W^k - \frac{1}{2} \Eijk \sum_m m x^j_{n-m} x^k_m \right),	\quad n \neq 0,
\]
the operators $x^*_{i,0}$ will be defined acting on functions as $f \mapsto x^*_i f$ with the commutation relations $\bk{x^*_{i,0}, x^j_n} = \bk{x^*_{i,0}, x^*_{j,0}} = 0$, $\bk{x^*_{i,0}, \beta_{j,n}} = \frac{1}{2} \Eijk x^k_n$, $\bk{W^j, x^*_{i,0}} = \delta_{i,j} K$.	\\

\noindent {\bf Remark:} The operators $P_i$ and $x^*_{i,n}$ are well defined because even when the sum appearing in the last term runs over the integers it is actually finite since $x^i_m$ acts by zero for $m$ big enough.
\\

It would be convenient to compute explicitly for later reuse all the commutators of the previously defined operators. It is obvious that

\begin{eqnarray}	\label{TrivialCommutationRelation}
\bk{\alpha^j_m, x^i_n}			& = &	0,	\\
\bk{\beta_{j,m}, x^i_n}			& = &	\delta_{i,j} \delta_{n,-m} K,	\\
\bk{\alpha^i_m, W^j}			& = &	0,	\\
\bk{\beta_{j,m}, W^i}			& =	&	0,	\\
\bk{\alpha^j_{m}, x^*_{i,n}}	& = &	\delta_{i,j} \delta_{n,-m} K	\\
\bk{\alpha^j_m, P_i}			& = &	0	\\
\bk{P_i, W^j}					& = &	0.	
\end{eqnarray}

For $\beta^j_m$ and $x^*_{i,n}$ with $n \neq 0$ it holds

\begin{eqnarray*}
	\bk{\beta_{j,m}, x^*_{i,n}}	& = &	-\frac{1}{n} \bk{\beta_{j,m}, \beta_{i,n}}	-	\frac{\epsilon_{ipq}}{n} \bk{\beta_{j,m}, x^p_n W^q}	+	\frac{\epsilon_{ipq}}{2n} \sum_s s \bk{\beta_{j,m}, x^p_{n-s} x^q_s}	\\
	& = &	\frac{\Eijk}{n} \alpha^k_{n+m}	-	\frac{\epsilon_{ipq}}{n} \delta_{j,p} \delta_{m,-n} W^q	+	\frac{\epsilon_{ipq}}{2n} \sum_s s \bk{\beta_{j,m}, x^p_{n-s}} x^q_s	+	\frac{\epsilon_{ipq}}{2n} \sum_s s x^q_s \bk{\beta_{j,m}, x^p_{n-s}}	\\
	& = &	\frac{\Eijk}{n} \alpha^k_{n+m}	-	\frac{\epsilon_{ijk}}{n} \delta_{m,-n} W^k	+	\frac{\epsilon_{ipq}}{2n} \sum_s s \delta_{j,p} \delta_{m,s-n} x^q_s	+	\frac{\epsilon_{ipq}}{2n} \sum_s s \delta_{j,q} \delta_{m,-s} x^p_{n-s} 	\\
	& = &	\frac{\Eijk}{n} \alpha^k_{n+m}	-	\frac{\Eijk}{n}  \delta_{m,-n} W^k	+	\frac{\Eijk}{2n} (m+n) x^k_{m+n}	+	\frac{\Eijk}{2n} m x^k_{m+n} \\
	& = &	\scalebox{0.68}{$(1-\delta_{m,-n})\frac{\Eijk}{n} \alpha^k_{n+m}	+	\delta_{m,-n}\frac{\Eijk}{n} \alpha^k_{n+m}	-	\frac{\Eijk}{n}  \delta_{m,-n} W^k	+	(1-\delta_{m,-n})\frac{\Eijk}{2n} (m+n) x^k_{m+n}	+	(1-\delta_{m,-n})\frac{\Eijk}{2n} m x^k_{m+n}	+	\delta_{m,-n}\frac{\Eijk}{2n} m x^k_{m+n}$}	\\
	& = &	\scalebox{0.78}{$(1-\delta_{m,-n}) \left( \frac{\Eijk}{n} \alpha^k_{n+m}	+	\frac{\Eijk}{2n} (m+n) x^k_{m+n}	+	\frac{\Eijk}{2n} m x^k_{m+n}	 \right)	+	\delta_{m,-n} \left(\frac{\Eijk}{n} \alpha^k_{n+m}	-	\frac{\Eijk}{n}  W^k	+	\frac{\Eijk}{2n} m x^k_{m+n}	 \right)$}	\\
	& = &	\scalebox{0.78}{$(1-\delta_{m,-n}) \left( -\frac{\Eijk}{n} (m+n) x^k_{n+m}	+	\frac{\Eijk}{2n} (m+n) x^k_{m+n}	+	\frac{\Eijk}{2n} m x^k_{m+n}	 \right)	+	\delta_{m,-n} \left(\frac{\Eijk}{n} \alpha^k_{0}	-	\frac{\Eijk}{n}  W^k	-	\frac{\Eijk}{2} x^k_{m+n}	 \right)$}	\\
	& = &	-(1-\delta_{m,-n}) \frac{\Eijk}{2}  x^k_{m+n}		-	\delta_{m,-n} \frac{\Eijk}{2} x^k_{m+n}	\\
	& = &	-	\frac{\Eijk}{2} x^k_{m+n},
\end{eqnarray*}

notice that $x^*_{i,0}$ was defined in a way such that the previous formula is also satisfied. 

For $\beta^j_m$ and $P^i$ the bracket is computed as follows

\begin{eqnarray*}
	\bk{\beta_{j,m}, P^i}	& = &	\bk{\beta_{j,m}, \beta_{i0}}	+	\epsilon_{ipq} \bk{\beta_{j,m}, x^p_0 W^q}	-	\frac{\epsilon_{ipq}}{2} \sum_{n} n \bk{\beta_{j,m}, x^p_{-n} x^q_n}	\\
	& = &	-\Eijk \alpha^k_m	+	\epsilon_{ipq} \bk{\beta_{j,m}, x^p_0}W^q	-	\frac{\epsilon_{ipq}}{2} \sum_{n} n \bk{\beta_{j,m}, x^p_{-n}} x^q_n	-	\frac{\epsilon_{ipq}}{2} \sum_{n} n x^p_{-n}\bk{\beta_{j,m},  x^q_n}	\\
	& = &	-\Eijk \alpha^k_m	+	\epsilon_{ipq} \delta_{j,p}\delta_{m,0} W^q	-	\frac{\epsilon_{ipq}}{2} \sum_{n} n \delta_{j,p} \delta_{m,n} x^q_n	-	\frac{\epsilon_{ipq}}{2} \sum_{n} n  \delta_{j,q} \delta_{m,-n} x^p_{-n}	\\
	& = &	-\Eijk \alpha^k_m	+	\epsilon_{ijk} \delta_{m,0} W^k	-	\frac{\epsilon_{ijk} m }{2}  x^k_m	+	\frac{\epsilon_{ikj} m}{2}   \delta_{m,-n} x^k_m	\\
	& = &	-\Eijk \alpha^k_m	+	\epsilon_{ijk} \delta_{m,0} W^k	-	\epsilon_{ijk} m   x^k_m	\\
	& = &	\Eijk(1 - \delta_{m,0}) ( -\alpha^k_m	-	m   x^k_m )	+	\epsilon_{ipk} z\delta_{m,0} (-	\alpha_0^k	+	W^k)	\\
	& = &	\Eijk(1 - \delta_{m,0}) ( m x^k_m	-	m   x^k_m )	+	\epsilon_{ipk} z\delta_{m,0} (-	W^k	+	W^k)	\\
	& = &	0.
\end{eqnarray*}

Similarly, the remaining commutators can be computed obtaining:

\begin{eqnarray}	\label{NoTrivialCommutationRelation}
\bk{\beta_{j,m}, x^*_{i,n}}			& = &	-	\frac{\Eijk}{2} x^k_{m+n},	\\
\bk{\beta_{j,m}, P_i}				& = &	0,								\\
\bk{P_i, P_j}						& = &	-	\Eijk W^k,						\\
\bk{x^*_{i,n}, x^*_{j,0}}			& =	&	\frac{\Eijk}{2n} x^k_n,		\\
\bk{x_{i,m}, x^*_{j,n}}				& = &	\frac{m+n}{2mn} \Eijk + \frac{1}{m^2} \Eijk W^k \delta_{m,-n}.	
\end{eqnarray}


Let us define the fields
\[
\alpha^i(z)	=	\sum_{n} \alpha^i_n z^{-1-n},
\]
\[
\beta_i(z)	=	\sum_{n} \beta_{i,n} z^{-1-n},
\]
and the logarithmic fields
\[
x^i (z)	=	W^i\LOGz + \sum_{n \in \mathbbm{Z}} x^i_n z^{-n},
\]
\[
x^*_i (z)	=	P_i\LOGz + \sum_{n \in \mathbbm{Z}} x^*_{in} z^{-n} + \frac{\LOGz}{2} \Eijk W^j x^k(z),
\]

Note that following differential equation holds
\begin{equation}	\label{DiffEquationAlpha}
D_z	x^i(z)	=	\alpha^i(z),
\end{equation}

it will be useful to find a similar equation for the derivative of $x^*_i(z)$. The following equation can be obtained by straightforward computation:

\begin{equation}	\label{DiffEquationBeta}
D_z	x^*_i(z)	=	\beta_i(z)	+	\frac{1}{2} \Eijk x^j(z) D_z x^k(z).
\end{equation}



\section{Modules and the Double Twisted Torus}

\subsection{Fibration over the 3 Torus}

As it was noticed before, the group $G$ acts on $L^{2}(G/ \Gamma)$ as left translations and therefore the Lie algebra $\Gg$ acts on smooth functions on $G / \Gamma$ as left invariant vector fields, i.e., $\Gg$ acts on a dense subspace of $L^2(G/ \Gamma)$; so similarly to the case of polynomials it is possible to obtain a $\KacMoody$-module out of it inducing
\[
\Hc = \Ind{\Gg[t] \oplus \Complexn K}{\KacMoody} C^{\infty}(G/ \Gamma), 
\]
where as usual $\alpha^i_n$ and $\beta_{i,n}$ act as zero when $n>0$ and $K$ acts as the identity. 

Every function $f \in C^{\infty}(G/ \Gamma)$ can be interpreted as a rapidly decreasing smooth function in six variables $f = f(x^i, x^*_i)$ invariant under the action of $\Gamma$ on the right, i.e. for every $(\gamma^i, \gamma^*_i) \in \Gamma$ holds $f(x^i, x^*_i)	=	f(x^i + \gamma^i, x^*_i + \gamma^*_i + \frac{1}{2} \Eijk x^j \gamma^k)$. Such functions can be decomposed in a\emph{ Fourier series} with respect to the orthonormal system $\set{e^{2\pi  \imu \omega_i x^*_i}}_{\omega \in \mathbbm{Z}^3}$ as:
\[
f(x^i, x^*_i)	=	\sum_{\omega \in \mathbbm{Z}^3} e^{2\pi  \imu \omega_i x^*_i} f_{\omega}(x^i),
\]
where $f_{\omega}$ satisfies $f_{\omega}(x^i + \gamma^i)	=	e^{-\pi \imu \Eijk \omega_i x^j \gamma^k} f_{\omega}(x^i)$.

Define 
\[
C_{\omega} = \set{ e^{2\pi  \imu \omega_i x^*_i} f;  \quad f: \mathbbm{R}^3 \rightarrow \Complexn,  \quad  f(x^i + \gamma^i)	=	e^{-\pi \imu \Eijk \omega_i x^j \gamma^k} f(x^i)},
\]
then
\[
L^{2}(G/ \Gamma)	\simeq	\bigoplus_{\omega \in \mathbbm{Z}^3} C_\omega,
\]
specifically for $\omega = 0$ we have
\[
C_0	=	\set{f: f(x^i + \gamma^i)	=	f(x^i)} = \bigoplus_{\rho \in \mathbbm{Z}^3} \Complexn e^{2\pi \imu \rho_i x^i}.
\]
Define also 
\[
\VT	=	\Ind{\Gg[t] \oplus \Complexn K}{\KacMoody} C_0	=	\Ind{\Gg[t] \oplus \Complexn K}{\KacMoody} \bigoplus_{\rho \in \mathbbm{Z}^3} \Complexn e^{2\pi \imu \rho_i x^i}	=	\bigoplus_{\rho \in \mathbbm{Z}^3} \Ind{\Gg[t] \oplus \Complexn K}{\KacMoody}  \Complexn e^{2\pi \imu \rho_i x^i}.
\]

Geometrically we have a $\mathbbm{T}^3$ fibration over the $\mathbbm{T}^3$
\[
\xymatrix{
	\mathbbm{T}^3 \ar[r]		&	G/ \Gamma \ar[r]	&	\mathbbm{T}^3,
} 
\]
and embeddings
\[
\Complexn	\subset		C^{\infty}(\mathbbm{T}^3)	\subset		C^{\infty}(G / \Gamma),
\]
which leads to 
\[
V^1(\Gg)  \vacuum	\subset \VT = \Ind{\Gg[t] \oplus \Complexn K}{\KacMoody} C^{\infty}(\mathbbm{T}^3)	\subset		\Hc = \Ind{\Gg[t] \oplus \Complexn K}{\KacMoody} C^{\infty}(G / \Gamma).
\]
We know $V^1(\Gg)$ is a vertex algebra, i.e., it is the Kac-Moody vertex algebra of level $1$, it turns out $\VT$ is a vertex algebra too.

\begin{theorem}		\label{Hf0IsVertexAlgebra}
	$\VT$ is a vertex algebra.
\end{theorem}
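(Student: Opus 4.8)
\emph{Proof proposal.} The plan is to realize $\VT$, as it sits inside $\Hc$, together with an explicit generating family of quantum fields, and then to invoke the standard reconstruction theorem for vertex algebras \cite{kac1998vertex} — equivalently, to verify that a certain collection $\mathcal W$ of fields is local, form the vertex algebra $\overline{\mathcal W}\subseteq\Field{\VT}$ it generates via Dong's Lemma \ref{DongsLemmaVertexAlgebras} and Theorem \ref{BakalovTheorem}, and identify $\VT\simeq\overline{\mathcal W}$ as vector spaces. The Kac--Moody part of the data is already available: $C_0\subset C^\infty(G/\Gamma)$ is a $\Gg$-submodule (on $C_0$ one has $\alpha^i=0$ and $\beta_i=\partial_{x^i}$), so $\VT=\Ind{\Gg[t]\oplus\Complexn K}{\KacMoody}C_0$ is a $\KacMoody$-submodule of $\Hc$, the currents $\alpha^i(z),\beta_i(z)$ restrict to fields on $\VT$, and through the vacuum $\vacuum=1\in C_0$ the space $\VT$ is already a module over the Kac--Moody vertex algebra $V^1(\Gg)$. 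What is new are lattice-type vertex operators attached to the ground states $e^{2\pi\imu\rho_ix^i}$, $\rho\in\mathbbm{Z}^3$, which span $C_0$.

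First I would observe that $W^i=\alpha^i_0$ acts by $0$ on all of $\VT$: the function $e^{2\pi\imu\rho_jx^j}$ does not depend on the $x^*_i$, hence $W^ie^{2\pi\imu\rho_jx^j}=0$, and $W^i$ commutes with every $\alpha^j_m$ and $\beta_{j,m}$, hence with all of $\KacMoody$, so $W^i\equiv0$ on the induced module. Consequently $x^i(z)=W^i\LOGz+\sum_nx^i_nz^{-n}$ restricts on $\VT$ to the log-free series $\sum_nx^i_nz^{-n}$, and $D_z$ acts there as $\partial_z$; in particular \eqref{DiffEquationAlpha} reads $\partial_zx^i(z)=\alpha^i(z)$ on $\VT$. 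I would then define, for each $\rho\in\mathbbm{Z}^3$,
\[
\Gamma_\rho(z)\ =\ \NormallyOP{\exp\!\big(2\pi\imu\,\rho_i\,x^i(z)\big)},
\]
acting on $\VT$; concretely $\Gamma_\rho(z)=e^{2\pi\imu\rho_ix^i_0}\exp\!\big(2\pi\imu\rho_i\sum_{k\ge1}x^i_{-k}z^k\big)\exp\!\big(2\pi\imu\rho_i\sum_{k\ge1}x^i_kz^{-k}\big)$. The last two exponentials are finite on every vector (the positive modes $x^i_k$ are locally nilpotent on $\VT$, shifting the degree), and $e^{2\pi\imu\rho_ix^i_0}$ is the well-defined operator $C_0\to C_0$ of multiplication by $e^{2\pi\imu\rho_ix^i}$, extended to $\VT$ through the commutation relations of $x^i_0$. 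Thus $\Gamma_\rho(z)\in\Field{\VT}$, just as for ordinary lattice vertex operators.

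Next I would check that $\mathcal W=\{\alpha^i(z),\beta_i(z),\Gamma_\rho(z):i=1,2,3,\ \rho\in\mathbbm{Z}^3\}$ is pairwise local. Among the currents this is the affine computation: $\alpha^i(z)\alpha^j(w)$, $\alpha^i(z)\beta_j(w)$ and $\beta_i(z)\beta_j(w)$ have at most a simple pole (using $\langle\alpha^i,\alpha^j\rangle=\langle\beta_i,\beta_j\rangle=0$, $\langle\beta_i,\alpha^j\rangle=\delta_{ij}$ and $[\beta_i,\beta_j]=\Eijk\alpha^k$). Since all the $x^i_n$ commute with each other and with the $\alpha^j_m$, and $W^i\equiv0$ on $\VT$, the operators $\Gamma_\rho(z)$ commute with $\Gamma_\sigma(w)$ and with every $\alpha^i(z)$; and because $[\beta_{i,m},x^j_n]=\delta_{ij}\delta_{m,-n}K$ for all $n$ while $[\beta_i(z),x^j(w)]$ is central, one gets $[\beta_i(z),\Gamma_\rho(w)]=2\pi\imu\rho_i[\beta_i(z),x^i(w)]\Gamma_\rho(w)$, a $\delta$-type distribution annihilated by $(z-w)$. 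Hence $\mathcal W$ is pairwise local. I would then apply reconstruction: $\vacuum=1$, $T$ is the usual translation operator, the fields of $\mathcal W$ are creative — in particular $\Gamma_\rho(z)\vacuum\in\VT[\![z]\!]$ with $\Gamma_\rho(z)\vacuum|_{z=0}=e^{2\pi\imu\rho_ix^i}$, since the $z^{-k}$-factor fixes $\vacuum$ and the $z^0$-factor is $e^{2\pi\imu\rho_ix^i_0}\cdot1$ — and translation covariant (for $\Gamma_\rho$ this uses $\partial_zx^i(z)=\alpha^i(z)$ and the translation covariance of $\alpha^i(z)$), and their modes generate $\VT=\bigoplus_\rho U(\Gg[t^{-1}]t^{-1})\cdot e^{2\pi\imu\rho_ix^i}$ from $\vacuum$. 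Therefore there is a unique vertex algebra structure on $\VT$ with $Y(e^{2\pi\imu\rho_ix^i},z)=\Gamma_\rho(z)$ and $Y(a_{-1}\vacuum,z)=\sum_na_nz^{-1-n}$ for $a\in\Gg$, extending that of $V^1(\Gg)$. (Equivalently, $\overline{\mathcal W}$ is a vertex algebra by Theorem \ref{BakalovTheorem}, and $a(z)\mapsto a(z)\vacuum|_{z=0}$ is a linear isomorphism $\overline{\mathcal W}\simeq\VT$ by creativity and the spanning property, along which the structure is transported.)

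I expect the real work to lie in two places. The first is the bookkeeping around the zero modes $x^i_0$ inside $\Gamma_\rho$: unlike in the polynomial model $\Complexn[x^i,x^*_i]$, multiplication by a bare coordinate $x^i$ is not defined on $C_0$ — only the exponentiated shift $e^{2\pi\imu\rho_ix^i_0}$ is — so $\Gamma_\rho$ must be set up and its creativity and translation covariance verified directly on the exponential basis. The second is the identification step, i.e.\ checking that the $n$-products of the few generators in $\mathcal W$ reproduce, faithfully and without spurious relations, the field attached to every vector of $\VT$; this is routine but lengthy. The conceptual point, by contrast, is the cheap observation $W^i\equiv0$, which is exactly what prevents any $\LOGz$ from surviving on $\VT$ and makes $\VT$ an ordinary (rather than logarithmic) vertex algebra — in contrast with the Heisenberg nilmanifold case treated below.
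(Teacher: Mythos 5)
Your proposal is correct and follows essentially the same route as the paper: the same normally ordered exponentials $\NormallyOP{e^{2\pi\imu\rho_i x^i(z)}}$ together with the currents $\alpha^i(z),\beta_i(z)$, the same locality checks (commutativity of the $x^i_n$, and $[\beta_j(z_1),e^{2\pi\imu\rho_i x^i(z_2)}]$ being a $\delta$-type distribution), and translation covariance via $D_zx^i(z)=\alpha^i(z)$; packaging the conclusion as reconstruction rather than verifying the axioms directly is only a presentational difference. Your explicit remark that $W^i=\alpha^i_0\equiv0$ on $\VT$ is a welcome clarification of why the paper's factor $z^{2\pi\imu\rho_iW^i}$ is harmless and no logarithms survive, but it does not change the argument.
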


The vertex algebra $\VT$ is called the \emph{vertex algebra of chiral differential operators} associated to $\mathbbm{T}^3$ whose existence is proven abstractly in \cite{malikov1999chiral}, however it would be convenient to give a prove of the theorem computing explicitly the quantum fields.

\begin{proof}
	Define the vacuum vector $\vacuum$ as the constant $1$ function, and consider the state field correspondence map
	\[
	Y: \VT \rightarrow \Field{\VT}
	\]
	\[
	e^{2\pi \imu \rho_i x^i} \mapsto \NormallyOP{ e^{2\pi \imu \rho_i x^i(z)} }	=	e^{2 \pi \imu \rho_i x^i_0} z^{2 \pi \imu \rho_i W^i} \exp \left( 2 \pi \imu \rho_i \sum_{n<0} x^i_n z^{-n}  \right) \exp \left( 2 \pi \imu \rho_i \sum_{n>0} x^i_n z^{-n} \right),
	\]
	\[
	\alpha^i_{-1} \vacuum	\mapsto	\alpha^i(z), 
	\]
	\[
	\beta_{i,-1} \vacuum	\mapsto	\beta_i(z).
	\]
	Let us quickly check the vertex algebra axioms :
	\[
	\left. Y(e^{2\pi \imu \rho_i x^i}, z) \vacuum \right|_{z=0}		=	\left. e^{2 \pi \imu \rho_i x^i_0} z^{ 2 \pi \imu \rho_i W^i} \exp \left( 2 \pi \imu \rho_i \sum_{n<0} x^i_n z^{-n}  \right) \exp \left( 2 \pi \imu \rho_i \sum_{n>0} x^i_n z^{-n} \right) \vacuum \right|_{z=0} = e^{2\pi \imu \rho_i x^i}.
	\]
	The fields of the form $Y(e^{2\pi \imu \rho_i x^i}, z)$ commute with each other because all the $x^i_n$ and $W^i$ commute, therefore they are local. The fields of the form $\alpha^i(z)$, $\beta_i(z)$ are pairwise local. Now because $\bk{\alpha^j_m, x^i(z_2)} = 0$ it is deduced that $\alpha^j(z_1)$ and $Y(e^{2\pi \imu \rho_i x^i}, z_2)$ commute. 
	
	The locality for the fields $\beta^j(z_1)$ and $e^{2\pi \imu \rho_i x^i(z_2)}$ is checked as follows:
	\[
	\bk{\beta_{j,n}, x^i(z_2)} =	\delta_{i,j}K z_2^n,
	\]
	then 
	\[
	\bk{\beta_{j,n}, e^{2\pi \imu \rho_i x^i(z_2)}} =	\delta_{i,j} 2\pi \imu \rho_i e^{2\pi \imu \rho_i x^i(z_2)} z_2^n,
	\]
	from this follows
	\[
	\bk{\beta_j(z_1), e^{2\pi \imu \rho_i x^i(z_2)}}	=	\sum_{n} \bk{\beta_{j,n}, e^{2\pi \imu \rho_i x^i(z_2)}} z_1^{-1-n}	 
	\]\[
	= \delta_{i,j} \sum_{n} 2\pi \imu \rho_i e^{2\pi \imu \rho_i x^i(z_2)} z_1^{-1-n} z_2^n	=	 \delta_{i,j} 2\pi \imu \rho_i e^{2\pi \imu \rho_i x^i(z_2)} \delta(z_1 , z_2), 
	\]
	therefore the fields $\beta^j(z)$ and $e^{2\pi \imu \rho_i x^i(z)}$ are local. The locality for any other pair of fields follows from Dong's Lemma \ref{DongsLemmaVertexAlgebras}.

	The last condition remaining to be proved is the translation invariance of the fields, let us define the translation endomorphism $T$ in $\VT$. Initially it is convenient to define $T$ acting on $x^i$, $T(x^i)$ should be a vector such that $Y( T(x^i), z) = \partial_z Y(x^i,z)$, but this equation is satisfied by $\alpha^i(z)$ because of the equation (\ref{DiffEquationAlpha}), so it becomes natural to define $T(x^i) = \alpha^i_{-1} \vacuum$.

	Now it easy to define $T$ on any function as
	\begin{equation}   \label{TonFunctionsExponentials}
	T \left( e^{2 \pi \imu \rho_i x^i} \right) 	=	2 \pi \imu \rho_i e^{2 \pi \imu \rho_i x^i} T(x^i)	=	2 \pi \imu \rho_i e^{2 \pi \imu \rho_i x^i} \alpha^i_{-1} \vacuum. 
	\end{equation}
	
	We define $T(\vacuum) = 1$ and in the same way as it is done in the Kac-Moody algebra we extend $T$ recursively by the formula $\bk{T, a_n} = -n a_{n-1}$ and impose the commutation relation $\bk{T, x^i_0} = \alpha_{-1}$, finally $T$ extends to the whole $\VT$ as a derivation of the normally ordered product. Note that $T$ was defined in a way so it satisfies translation invariance for the fields $\alpha^i(z)$ and $\beta_{i}(z)$, and for $x^i(z)$ holds
	\[
	\bk{T, x^i(z)}	=	\bk{T, W^i}\LOGz	+	\sum_n \bk{T, x^i_n} z^{-n}	=	\bk{T, \alpha^i_0}\LOGz	+	\sum_n - \frac{1}{n} \bk{T, \alpha^i_n} z^{-n}	
	\]\[
	=	\sum_n - \frac{1}{n} (-n) \alpha^i_{n-1} z^{-n}	=	 \sum_n \alpha^i_{n} z^{-1-n}	=	\alpha^i(z)	=	\partial_z x^i(z),
	\] 
	so now we can compute
	\[
	\bk{T, e^{2 \pi \imu \rho_i x^i(z)}}	=	2 \pi \imu \rho_i e^{2 \pi \imu \rho_i x^i(z)} \bk{T, x^i(z)}	=	2 \pi \imu \rho_i  e^{2 \pi \imu \rho_i x^i(z)}	\partial_z x^i(z)	=	\partial_z e^{2 \pi \imu \rho_i x^i(z)}.
	\]
\end{proof}

\begin{theorem}	\label{HF0Module}
	The space $\Hc$ has the structure of  $\VT$-module.
\end{theorem}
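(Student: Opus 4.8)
The plan is to deduce the statement from Bakalov's Theorem~\ref{BakalovTheorem} together with the corollary following it: it is enough to produce a collection $\mathcal{W}\subseteq\Lfield{\Hc}$ of pairwise local logarithmic fields on $\Hc$ and a vertex algebra morphism $\VT\to\overline{\mathcal{W}}$. The natural choice is to reuse the fields already introduced in the proof of Theorem~\ref{Hf0IsVertexAlgebra}, but now reading them as operators on the larger space $\Hc=\Ind{\Gg[t]\oplus\Complexn K}{\KacMoody}C^{\infty}(G/\Gamma)$ instead of on $\VT=\Ind{\Gg[t]\oplus\Complexn K}{\KacMoody}C_0$: namely $\alpha^i(z)=\sum_n\alpha^i_nz^{-1-n}$, $\beta_i(z)=\sum_n\beta_{i,n}z^{-1-n}$, and $X_\rho(z):=\NormallyOP{e^{2\pi\imu\rho_ix^i(z)}}$ for $\rho\in\mathbbm{Z}^3$, with $x^i(z)=W^i\LOGz+\sum_nx^i_nz^{-n}$ and $W^i=\alpha^i_0$. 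The key observation is that, whereas on $\VT$ one has $W^i\equiv 0$ (so $z^{2\pi\imu\rho_iW^i}=1$ and the $X_\rho(z)$ are ordinary fields), on the summand $C_\omega\subset C^{\infty}(G/\Gamma)$ the operator $W^i=\alpha^i_0=\partial_{x^*_i}$ acts by the scalar $2\pi\imu\omega_i$, so $z^{2\pi\imu\rho_iW^i}$ contributes the factor $z^{-4\pi^{2}\rho_i\omega_i}$ and the $X_\rho(z)$ become genuine logarithmic fields, with $\LOGz$ entering only exponentiated. Thus $\mathcal{W}=\set{\alpha^i(z),\beta_i(z),X_\rho(z)}\subseteq\Lfield{\Hc}$.

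First I would check that $\mathcal{W}$ is a pairwise local collection of logarithmic fields, by transcribing the locality computations from the proof of Theorem~\ref{Hf0IsVertexAlgebra}. The $X_\rho(z)$ commute among themselves because all the operators $x^i_0$, $W^i$, and $x^i_n$ ($n\neq0$) commute; $\alpha^i(z)$ and $\beta_i(z)$ are pairwise local as generating fields of $\KacMoody$; $\alpha^j(z_1)$ commutes with $X_\rho(z_2)$ since $\bk{\alpha^j_m,x^i(z_2)}=0$; and $\bk{\beta_j(z_1),X_\rho(z_2)}=2\pi\imu\rho_jX_\rho(z_2)\,\delta(z_1,z_2)$ since $\bk{\beta_{j,m},x^i(z_2)}=\delta_{i,j}z_2^m$ is a central scalar. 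Enlarging the representation space changes none of these commutators, so locality is unaffected. By Lemma~\ref{DongsLemmaLogModules} and Theorem~\ref{BakalovTheorem}, $\overline{\mathcal{W}}$ is then a vertex algebra with vacuum $\id$ and translation operator $D_z$.

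It remains to produce the vertex algebra morphism $\phi:\VT\to\overline{\mathcal{W}}$ determined by $\vacuum\mapsto\id$, $\alpha^i_{-1}\vacuum\mapsto\alpha^i(z)$, $\beta_{i,-1}\vacuum\mapsto\beta_i(z)$, and $e^{2\pi\imu\rho_ix^i}\mapsto X_\rho(z)$. Since $\VT$ is strongly generated by these vectors, $\phi$ is forced on a spanning set by the requirement $\phi(a_{(n)}b)=\phi(a)_{(n)}\phi(b)$, and it is well defined as soon as the generating fields on $\Hc$ obey the same $n$-product relations as the corresponding states do in $\VT$ — that is, the operator product expansions $\alpha^i(z_1)\alpha^j(z_2)\sim 0$, $\alpha^i(z_1)\beta_j(z_2)\sim\delta_{i,j}(z_1-z_2)^{-2}$, $\beta_i(z_1)\beta_j(z_2)\sim\Eijk\alpha^k(z_2)(z_1-z_2)^{-1}$, $\alpha^i(z_1)X_\rho(z_2)\sim 0$, $\beta_j(z_1)X_\rho(z_2)\sim 2\pi\imu\rho_jX_\rho(z_2)(z_1-z_2)^{-1}$, together with $\NormallyOP{X_\rho(z)X_\sigma(z)}=X_{\rho+\sigma}(z)$ and $D_zX_\rho(z)=2\pi\imu\rho_i\NormallyOP{\alpha^i(z)X_\rho(z)}$. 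Each of these is obtained by exactly the computation used for Theorem~\ref{Hf0IsVertexAlgebra}: the first three from the Kac-Moody commutators, the next two from $\bk{\alpha^j_m,x^i(z)}=0$ and $\bk{\beta_{j,m},x^i(z)}=\delta_{i,j}z^m$, and the last two from \eqref{DiffEquationAlpha} and the triviality of the $x^i$-with-$x^j$ OPE; the only new feature is that the factors $z_1^{2\pi\imu\rho_iW^i}$ are no longer trivial, but they still combine multiplicatively, the restriction of $z_1^{2\pi\imu\rho_iW^i}z_2^{2\pi\imu\sigma_iW^i}$ to $z_1=z_2=z$ being $z^{2\pi\imu(\rho+\sigma)_iW^i}$. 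By the corollary to Theorem~\ref{BakalovTheorem}, $\phi$ followed by the inclusion $\overline{\mathcal{W}}\hookrightarrow\Lfield{\Hc}$ equips $\Hc$ with the structure of a $\VT$-module (a logarithmic one, with the logarithms entering only exponentiated).

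The step I expect to be the main obstacle is the last one: verifying, now that the $X_\rho(z)$ are genuine logarithmic fields, that their $n$-products — computed via the logarithmic $n$-product \eqref{nProductBakalovLogFields} in terms of $D_{z_1}$ rather than $\partial_{z_1}$, and with the non-trivial factors $z_1^{2\pi\imu\rho_iW^i}$ in play — still reproduce the vertex algebra operations of $\VT$, so that $\phi$ is genuinely well defined rather than merely defined on generators. Everything else is a transcription of the proof of Theorem~\ref{Hf0IsVertexAlgebra}.
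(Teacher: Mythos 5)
Your proposal is correct and takes essentially the same route as the paper: the paper likewise reuses the fields $\alpha^i(z)$, $\beta_i(z)$, $\NormallyOP{e^{2\pi\imu\rho_ix^i(z)}}$ from Theorem~\ref{Hf0IsVertexAlgebra} on the larger space $\Hc$, observes that the locality computations never used the vanishing of the logarithmic terms, and then verifies compatibility with the $n$-products on the generating pairs (the only nontrivial case being $\beta_i(z)_{(0)}e^{2\pi\imu\rho_jx^j(z)}$). Your framing via the corollary to Bakalov's Theorem~\ref{BakalovTheorem} rather than a direct check of the module axioms is only a presentational difference, since the paper's own corollary identifies the two formulations.
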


\begin{proof}
	We must define a  field for each vector of $\VT$. Set
	\begin{eqnarray*}
		Y( e^{2\pi \imu \rho_i x^i}, z)	&	=	&	e^{2\pi \imu \rho_i x^i(z)},	\\
		Y( \alpha^i_{-1}\vacuum, z)					&	=	&	\alpha^i(z),	\\ 
		Y( \beta_{i,-1}\vacuum, z)					&	=	&	\beta_i(z),
	\end{eqnarray*}
	and for the rest of the elements define the associated field by the normally ordered product, exactly as in the Kac-Moody algebra, for example
	\[
	Y(\alpha^i_{-1} e^{2\pi \imu \rho_j x^j})	= \NormallyOP{ \alpha^i(z) e^{2\pi \imu \rho_j x^j(z)}}.
	\]
	
	Note that we are defining the fields exactly as in the proof of theorem \ref{Hf0IsVertexAlgebra} and proving the locality condition for those fields we never used the fact that the logarithmic terms disappeared, i.e., what we actually prove there was that those logarithmic fields are pairwise local.
	
	Now in this way because of \ref{DerivativeNProduct} and because the normally ordered product of two fields is the $-1$-product,  the function $Y: \VT \rightarrow \Field{\Hc}$ commutes with the $n$-products when $n < 0$.
	For any two pairs of fields of the form $\alpha^i(z)$ and $\beta^j(z)$, it is trivial to see that the $n$-product condition holds.
	So it is only left to check it for pair of fields $\upla{\alpha^i(z), e^{2\pi \imu \rho_j x^j(z)}}$,  $\upla{\beta_i(z), e^{2\pi \imu \rho_j x^j(z)}}$, $\upla{ e^{2\pi \imu \rho_i x^i(z)}, e^{2\pi \imu \rho_i x^i(z)}}$ and for $n \geq 0$.
	
	As $e^{2\pi \imu \rho_i x^i(z)}$ and $e^{2\pi \imu \rho_j x^j(z)}$ commute we know that $ {e^{2\pi \imu \rho_i x^i(z)}}_{(n)} e^{2\pi \imu \rho_j x^j(z)} = 0$ for $n \geq 0$ but ${e^{2\pi \imu \rho_i x^i}}_{(n)} e^{2\pi \imu \rho_j x^j} = 0$ as well, the same situation repeats for the fields $\upla{\alpha^i(z), e^{2\pi \imu \rho_j x^j(z)}}$.
	
	For the last pair of fields $\upla{\beta_i(z), e^{2\pi \imu \rho_j x^j(z)}}$ we only need to compute for $n = 0$,
	\begin{eqnarray*}
		\beta_i(z)_{(0)} e^{2\pi \imu \rho_j x^j(z)}	&	=	&	\left. (z_1 - z_2) \bk{\beta_i(z_1)_{-}, e^{2\pi \imu \rho_j x^j(z_2)}} \right\vert_{z_1=z_2 = z}	\\
		&	=	&	2\pi \imu \delta_{i,j}\rho_j e^{2\pi \imu \rho_j x^j(z)}	\\
		&	=	&	Y\left( 2\pi \imu \rho_j \delta_{i,j} e^{2\pi \imu \rho_j x^j}, z\right)	\\
		&	=	&	Y\left( \beta_{i,-1}\vacuum_{(0)} e^{2\pi \imu \rho_j x^j}, z\right).
	\end{eqnarray*}
	
\end{proof}



\subsection{Fibration over the Heisenberg Nilmanifold} 

As it was explained before there is a $\mathbbm{T}^3$ fibration over the Heisenberg nilmanifold
\[
\xymatrix{
	\mathbbm{T}^3 \ar[r]		&	G/ \Gamma \ar[r]	&	N = \HeisR \left/ \HeisZ \right. ,
}
\]

the goal of this subsection is proving that $\Hc$ is a logarithmic module over the vertex algebra of chiral differential operators over the Heisenberg nilmanifold ($V_N$), we will do this by carefully restricting to a  subspace of $C^{\infty}(G/ \Gamma)$ such the vector fields identify with the Heisenberg Lie algebra, i.e., we will describe explicitly the structure of $V_N$ and compute the quantum fields. In order to achieve that it will be required to use some techniques from harmonic analysis on the Heisenberg group to deduce the structure of the space and therefore define the quantum vectors (logarithmic) fields for the algebra and for the module $\Hc$. The reader interested in a deeper study of  harmonic analysis of the Heisenberg group may consult  \cite{auslander2006abelian}. \\


Define the symbol $\xi_{ijk}$ as $\xi_{123} = \xi_{231} = \xi_{312} = 1$ and $\xi_{ijk} = 0$ for the remaining cases. Notice that it holds that $\xi_{ijk} - \xi_{ikj} = \Eijk$. 

It is convenient to change the coordinates
\begin{eqnarray*}
	x^i		&	\mapsto	&	x^i, \\
	x^*_i	& \mapsto	&	x^*_i + \frac{1}{2} \Hijk x^jx^k,
\end{eqnarray*}
so that the group law turns into
\[
(x^i, x^*_i) (y^i, y^*_i)	=	(x^i + y^i, x^*_i + y^*_i + \Hijk x^jy^k ),
\]
and the action of $\alpha^i$ and $\beta_j$  in these coordinates looks like
\begin{eqnarray*}	
	\alpha^i	&	=	&	\partial_{x^*_i}	\\
	\beta_i		&	=	&	\partial_{x^i} + \Hijk x^k \partial_{x^*_j}.
\end{eqnarray*}

We still have the Fourier type decomposition $L^{2}(G/ \Gamma)	\simeq	\bigoplus_{\omega \in \mathbbm{Z}^3} C_\omega$ but this time $C_{\omega}$ is given by
\[
C_{\omega} = \set{ e^{2\pi  \imu \omega_i x^*_i} f; \quad f: \mathbbm{R}^3 \rightarrow \Complexn,  \quad  f(x^i + \gamma^i)	=	e^{-2\pi \imu \omega_i \Hijk x^j \gamma^k} f(x^i)}.
\]
Consider vectors $\omega \in \mathbbm{Z}$ of the form $\omega = (0,0,n)$ 
\[
C_{(0,0,n)} = \set{ e^{2\pi  \imu n x^*_3} f_{0,0,n}: \quad f_{0,0,n}(x^i + \gamma^i) = e^{-2 \pi \imu n x^1 \gamma^2} f_{0,0,n}(x^i)},
\]
now  those functions $f_{0,0,n}$ can be decomposed into Fourier series once again
\[
f_{0,0,n} (x^1, x^2, x^3) = \sum_{m \in \mathbbm{Z}} e^{2 \pi \imu m x^3} f_{0,0,n,m}(x^1, x^2),
\]
from this it follows the decomposition of $C_{(0,0,n)} = \bigoplus_{m \in \mathbbm{Z}} C_{(0,0,n,m)}$. Let us take $m=0$ and consider the elements from $C_{(0,0,n,0)}$ for all $n \in \mathbbm{Z}$, i.e. the functions $f_n: \mathbbm{R}^2 \rightarrow \Complexn$ such that $f_n(x^1 + \gamma^1, x^2 + \gamma^2)	=	e^{-2 \pi \imu n x^1 \gamma^2} f_n(x^1, x^2)$. Define
\[
\Cc	=	\bigoplus_{n \in \mathbbm{Z}}  C_{(0,0,n,0)}	=	\set{\sum_n e^{2 \pi \imu n x^*_3} f_n(x^1, x^2)} \subset L^2(G / \Gamma).
\]

Note that restricting to elements of $G$ of the form $(x^1, x^2, 0, 0, 0, x^*_3)$ is the same as working in $\HeisR$, the (polarized) Heisenberg group 
\[
\set{
	\begin{pmatrix}
	1	&	x^1	&	x^*_3	\\
	0	&	1	&	x^2		\\
	0	&	0	&	1
	\end{pmatrix}
},
\]
so $\HeisR$ acts on $C_{0,0,n,0}$ by right translations, i.e., looking at the functions on $Y$ depending only on the variables $x^1$, $x^2$, $x^*_3$ is the same as looking at the Heisenberg nilmanifold $N = \HeisR / \HeisZ$.


\noindent {\it Remark:}	The space $\Cc$ constructed above using Fourier analysis can be described intrinsically as follows:  since the Heisenberg group is a central extension
\[
1	\rightarrow \mathbbm{R}	\rightarrow		\HeisR	\rightarrow		\mathbbm{R}^2	\rightarrow		1,
\]
and if we take a one dimensional complex representation of the center, i.e., a central character $\chi_n : \mathbbm{R} \rightarrow \Complexn^*$, $\chi_n(x^*_3) = e^{2\pi \imu n x^*_3}$ then $C_{(0,0,n,0)}	=	\CoInd{\mathbbm{R}}{\HeisR}	\Complexn_{\chi_n} $ and $\Cc$ can be expressed as
\[
\Cc	=	\bigoplus_{n \in \mathbbm{Z}}  C_{(0,0,n,0)}		=	\bigoplus_{n \in \mathbbm{Z}}  \CoInd{\mathbbm{R}}{\HeisR}	\Complexn_{\chi_n}.
\]
\\

Define for $n \neq 0$ and $m \in \set{0,1, \ldots \vert n \vert - 1}$ the k-linear map $\Theta_m: L^2(\mathbbm{R}) \rightarrow C_{(0,0,n,0)}$ as
\begin{equation}	\label{MapTheta}
\Theta_m(g) (x^1, x^2, 0, 0, 0, x^*_3)	=	e^{2 \pi \imu n x^*_3} \sum_{k \in \mathbbm{Z}} e^{2 \pi \imu (n k + m) x^1}  g(x^2 + k),	
\end{equation}

and for $\gamma^1, \gamma^2 \in \mathbbm{Z}$ it holds
\[
\Theta_m(g) (x^1+\gamma^1, x^2 + \gamma^2,0,0,0,x^*_3)	=	e^{2 \pi \imu n x^*_3} \sum_{k \in \mathbbm{Z}} e^{2 \pi \imu (n k + m)(x^1 + \gamma^1)} g(x^2 + \gamma^2 + k)	
\] 
\[
=	e^{2 \pi \imu n x^*_3} \sum_{k \in \mathbbm{Z}} e^{2 \pi \imu (n k + m) x^1} g(x^2 + \gamma^2 + k),	
\] 
making $s = k + \gamma^2$
\[
=	e^{-2 \pi \imu n x^1 \gamma^2} e^{2 \pi \imu n x^*_3} \sum_{s \in \mathbbm{Z}} e^{2 \pi \imu (n s + m) x^1} g(x^2 + s)	= e^{-2 \pi \imu n x^1 \gamma^2} \Theta_k(g) (x^1, x^2, 0, 0, 0, x^*_3),
\] 
then $\Theta_m(g) \in C_{0,0,n,0}$, so $\Theta_m$ is a well defined linear, because of the orthogonality relations between the exponentials the maps $\Theta_m$ are monomorphisms so then they define a unique monomorphism 
\[
\Theta: L^2(\mathbbm{R}) \otimes \Complexn^{\vert n \vert} \rightarrow C_{(0,0,n,0)}.
\]
Let $e^{2 \pi \imu n x^*_3}f$ be an element in $C_{(0,0,n,0)}$ then because $f(x^1 + 1, x^2)	=	f(x^1, x^2)$ it can be decomposed into Fourier series
\[
e^{2 \pi \imu n x^*_3} f(x^1, x^2)	=	e^{2 \pi \imu n x^*_3} \sum_{k \in \mathbbm{Z}} e^{2 \pi \imu k x^1} f_k(x^2)	=	\sum_{m=0}^{\Abs{n}-1} e^{2 \pi \imu n x^*_3} \sum_{k \in \mathbbm{Z}}  e^{2 \pi \imu (nk + m) x^1} f_{kn+m}(x^2), 
\] 
from the property $f(x^1, x^2 + \gamma^2) = e^{-2 \pi \imu n x^1 \gamma^2 }f(x^1, x^2)$ it follows $f_{nk+m}(x^2) = f_m(x^2 + k)$ so 
\[
e^{2 \pi \imu n x^*_3} f(x^1, x^2)	=	\sum_{m=0}^{\Abs{n}-1} e^{2 \pi \imu n x^*_3} \sum_{k \in \mathbbm{Z}}  e^{2 \pi \imu (nk + m) x^1} f_{m}(x^2 + k), 
\]
this means that any function in $C_{0,0,n,0}$ is uniquely determined by $f_0, f_1, \ldots, f_{\Abs{n}-1}$, i.e., there is a linear injection

\begin{eqnarray*}	
	\Phi: C_{(0,0,n,0)}	&	\rightarrow		&	L^2(\mathbbm{R}) \otimes \Complexn^{\Abs{n}}	\\
	f					&	\mapsto			&	\upla{f_0, \ldots, f_{\Abs{n}-1}},
\end{eqnarray*}
moreover $\Phi$ and $\Theta_m$ are inverse functions, so we can make $L^2(\mathbbm{R}) \otimes \Complexn^{\Abs{n}}$ a $\HeisR$-module.

Now because of the Stone-von Neumann theorem\cite{folland2016course}, $L^2(\mathbbm{R})$ is the only irreducible unitary representation of $\HeisR$ with the central character $\chi_n(t) = e^{2 \pi \imu n t}$  and there is a unique decomposition $C_{(0,0,n,0)} \simeq L^2(\mathbbm{R}) \otimes \Complexn^{p_n}$ being $\Theta$ an isomorphism and $\Abs{n} = p_n$. A fully detailed proof of this can be found in \cite{auslander2006abelian}.

\begin{proposition} \label{DescomposiciondeC00n0}
	If $n \neq 0$ then $C_{(0,0,n,0)} \simeq L^2(\mathbbm{R}) \otimes \Complexn^{\Abs{n}}$ and for $n=0$ it holds that $C_{(0,0,n,0)} \simeq L^2(\mathbbm{T}^2)$, i.e.,
	\[
	\Cc \simeq L^2(\mathbbm{T}^2) \oplus \bigoplus_{n \neq 0} L^2(\mathbbm{R}) \otimes \Complexn^{\Abs{n}} 
	\simeq \bigoplus_{\rho \in \mathbbm{Z}^2} \Complexn e^{2\pi \imu \rho_i x^i} \oplus \bigoplus_{n \neq 0} L^2(\mathbbm{R}) \otimes \Complexn^{\Abs{n}}. 
	\]
\end{proposition}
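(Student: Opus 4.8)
The plan is to assemble the claimed isomorphism from three ingredients already developed in the preceding paragraphs: the explicit maps $\Theta_m$ and $\Phi$ together with the Stone--von Neumann uniqueness theorem for $n\neq 0$, the $\omega=0$ Fourier computation for $n=0$, and finally the Fourier-series decomposition $L^2(G/\Gamma)\simeq\bigoplus_{\omega\in\mathbbm{Z}^3}C_\omega$ specialized to the subspace $\Cc$.

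First I would treat the case $n\neq 0$. The text has already constructed the linear monomorphism $\Theta\colon L^2(\mathbbm{R})\otimes\Complexn^{\Abs{n}}\to C_{(0,0,n,0)}$ built from the $\Theta_m$, and the linear injection $\Phi\colon C_{(0,0,n,0)}\to L^2(\mathbbm{R})\otimes\Complexn^{\Abs{n}}$ coming from the Fourier expansion $f=\sum_{m=0}^{\Abs{n}-1}\sum_{k}e^{2\pi\imu(nk+m)x^1}f_m(x^2+k)$, and it has observed that $\Phi$ and $\Theta$ are mutually inverse; this already gives the vector-space isomorphism $C_{(0,0,n,0)}\simeq L^2(\mathbbm{R})\otimes\Complexn^{\Abs{n}}$. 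To upgrade this to a statement about $\HeisR$-modules one checks that $\Theta$ intertwines the right-translation action of $\HeisR$ on $C_{(0,0,n,0)}$ with the action it induces on $L^2(\mathbbm{R})\otimes\Complexn^{\Abs{n}}$; since $\Theta_m$ sends $g$ to a theta-type series in the variables $x^1,x^2,x^*_3$, a direct substitution of the group law $(x^i,x^*_i)(y^i,y^*_i)=(x^i+y^i,x^*_i+y^*_i+\Hijk x^jy^k)$ shows the right-translation action on $L^2(\mathbbm{R})$ is the Schr\"odinger representation with central character $\chi_n$. By Stone--von Neumann this is the unique irreducible unitary representation with that central character, so $\Complexn^{\Abs{n}}$ is exactly the multiplicity space and $p_n=\Abs{n}$, as asserted; the fully detailed version is cited to \cite{auslander2006abelian}.

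Next I would dispose of the case $n=0$. Here $C_{(0,0,0,0)}$ consists of functions $f(x^1,x^2)$ with $f(x^1+\gamma^1,x^2+\gamma^2)=f(x^1,x^2)$, i.e.\ functions on $\mathbbm{T}^2$, so $C_{(0,0,0,0)}\simeq L^2(\mathbbm{T}^2)$, and Fourier analysis on $\mathbbm{T}^2$ gives the orthogonal decomposition $L^2(\mathbbm{T}^2)=\bigoplus_{\rho\in\mathbbm{Z}^2}\Complexn e^{2\pi\imu\rho_ix^i}$. Combining the two cases with the definition $\Cc=\bigoplus_{n\in\mathbbm{Z}}C_{(0,0,n,0)}$ yields
\[
\Cc\simeq L^2(\mathbbm{T}^2)\oplus\bigoplus_{n\neq 0}L^2(\mathbbm{R})\otimes\Complexn^{\Abs{n}}\simeq\bigoplus_{\rho\in\mathbbm{Z}^2}\Complexn e^{2\pi\imu\rho_ix^i}\oplus\bigoplus_{n\neq 0}L^2(\mathbbm{R})\otimes\Complexn^{\Abs{n}},
\]
which is the claim.

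The main obstacle is not any single hard estimate but the bookkeeping that makes the decomposition genuinely well defined: one must verify that the iterated Fourier expansions (first in $x^*_i$ to isolate $C_\omega$, then in $x^3$ to isolate the index $m$, then in $x^1$ to split $C_{(0,0,n,0)}$ into the $\Abs{n}$ strands $f_0,\dots,f_{\Abs{n}-1}$) are compatible and that convergence is controlled so that the direct sums are orthogonal Hilbert-space decompositions rather than merely algebraic ones. I would handle this by invoking that $f\in C^\infty(G/\Gamma)$ is rapidly decreasing in the relevant sense, so all the Fourier series converge absolutely and the reindexing $s=k+\gamma^2$ used to prove $\Theta_m(g)\in C_{(0,0,n,0)}$ is legitimate term by term; the orthogonality of the exponentials $\{e^{2\pi\imu(nk+m)x^1}\}$ then guarantees that $\Theta$ and $\Phi$ are genuine inverse isometries (up to normalization), and that the sum over $n$ is orthogonal because the characters $\chi_n$ of the center are distinct. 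With these convergence and orthogonality points in place, the proposition follows by collecting the pieces.
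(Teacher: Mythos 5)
Your proposal is correct and follows essentially the same route as the paper, whose proof of this proposition is the discussion immediately preceding it: the maps $\Theta_m$ and $\Phi$ shown to be mutually inverse via the iterated Fourier expansions and the quasi-periodicity relation $f_{nk+m}(x^2)=f_m(x^2+k)$, the Stone--von Neumann theorem identifying the multiplicity space $\Complexn^{\Abs{n}}$, and the elementary Fourier decomposition of $L^2(\mathbbm{T}^2)$ for $n=0$. The extra care you take with orthogonality, convergence, and the explicit intertwining check for the $\HeisR$-action is a reasonable tightening of what the paper delegates to \cite{auslander2006abelian}.
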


Proposition \ref{DescomposiciondeC00n0}  means that the $G$-module $\Cc$ identifies with the $\Gg$-module 
\[
\bigoplus_{\rho \in \mathbbm{Z}^2} \Complexn e^{2\pi \imu \rho_i x^i} \oplus \bigoplus_{n \neq 0} S(\mathbbm{R}) \otimes \Complexn^{\Abs{n}},
\]
where $S(\mathbbm{R})$ denotes the Schwartz space of rapidly decreasing smooth functions in $\mathbbm{R}$. We will also denote this space by $\Cc$. Since one space is the completion of the other one and it will always be clear to distinguish which one we are using.


Define the action of elements of $at^n \in t\Gg[t]$ on $\Cc$ by zero and the action of $K$ as the identity so it is possible now to induce
\[
V_N	=	\Ind{\Gg[t]\oplus \Complexn K}{\KacMoody} \Cc	=	\Ind{\Gg[t]\oplus \Complexn K}{\KacMoody} \left( \bigoplus_{\rho \in \mathbbm{Z}^2} \Complexn e^{2\pi \imu \rho_i x^i} \oplus \bigoplus_{n \neq 0} S(\mathbbm{R}) \otimes \Complexn^{\Abs{n}} \right)
\]
\[
=	\bigoplus_{\rho \in \mathbbm{Z}^2} \Ind{\Gg[t]\oplus \Complexn K}{\KacMoody} \Complexn e^{2\pi \imu \rho_i x^i} \oplus \bigoplus_{n \neq 0} \Ind{\Gg[t]\oplus \Complexn K}{\KacMoody} S(\mathbbm{R}) \otimes \Complexn^{\Abs{n}}
\]

\begin{theorem}			\label{Hf1IsVertexAlgebra}
	The space $V_N$ has a vertex algebra structure.
\end{theorem}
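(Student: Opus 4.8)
The plan is to prove Theorem \ref{Hf1IsVertexAlgebra} in close analogy with the proof of Theorem \ref{Hf0IsVertexAlgebra}, replacing the exponential generators $e^{2\pi\imu\rho_ix^i}$ by vertex operators attached to the Jacobi theta functions that span the Schwartz summands $S(\mathbbm{R})\otimes\Complexn^{\Abs{n}}$. Concretely, for each $n\neq 0$ and each $m\in\set{0,\ldots,\Abs{n}-1}$ I would pick a convenient cyclic vector $\vartheta_{n,m}\in C_{(0,0,n,0)}$ — the Gaussian $g(x^2)=e^{-\pi\Abs{n}(x^2)^2}$ pushed through $\Theta_m$ gives a classical Jacobi theta function in $x^1,x^2$ — and define its state-field correspondence by the same normal-ordering recipe used for exponentials, namely a product of $e^{2\pi\imu(\ldots)x^i_0}$ factors, a factor $z^{(\ldots)W^i}$ producing the logarithmic dependence, and creation/annihilation exponentials $\exp(\sum_{n<0})\exp(\sum_{n>0})$ in the modes $x^i_n$. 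Since $\Cc$ is generated over $\Gg$ (equivalently $\KacMoody$) by these theta vectors together with the toroidal exponentials for $\rho\in\mathbbm{Z}^2$, and $V_N$ is the induced module, it suffices to define $Y$ on these generators and on $\alpha^i_{-1}\vacuum$, $\beta_{i,-1}\vacuum$, extend to the rest of $V_N$ by the normally ordered product formula (\ref{KacMoody_statefields}), and then verify the vacuum, translation-invariance, and locality axioms.

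The key steps, in order: (1) Write down explicitly $Y(\vartheta_{n,m},z)$ and check the vacuum axiom $Y(\vartheta_{n,m},z)\vacuum|_{z=0}=\vartheta_{n,m}$, which is immediate since the only surviving term at $z=0$ is the zero-mode exponential acting on the constant. (2) Establish locality. The pairs $(\alpha^i(z),\alpha^j(w))$, $(\beta_i(z),\beta_j(w))$, $(\alpha^i(z),\beta_j(w))$ are handled exactly as before. For a pair of theta-fields, the needed commutator is controlled by $\bk{\alpha^i_m,x^j(w)}=0$ and by the commutators $\bk{\beta_{i,n},x^j(w)}$ computed in Section \ref{section2.01}; the crucial point is that the theta function absorbs the winding-dependent phase $e^{-2\pi\imu n x^1\gamma^2}$, so that after contraction the "singular part" is again a rational function times a $\delta$-type distribution in $z/w$, giving locality with some finite $N$. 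For $(\beta_i(z),\vartheta_{n,m}(w))$ one computes $\bk{\beta_{i,n},Y(\vartheta_{n,m},w)}$ and finds it proportional to $Y(\vartheta_{n,m},w)$ times $w^{\bullet}$, hence a delta distribution, exactly mirroring the exponential case. Locality of all remaining pairs then follows from Dong's Lemma \ref{DongsLemmaVertexAlgebras} (or \ref{DongsLemmaLogModules} in the logarithmic setting). (3) Define the translation operator $T$ by $T(x^i)=\alpha^i_{-1}\vacuum$, extend to theta vectors via the chain rule $T(\vartheta_{n,m})=\sum(\partial_{x^i}\vartheta_{n,m})\otimes\alpha^i_{-1}\vacuum$ — making sense of this requires expressing $\partial_{x^i}\vartheta_{n,m}$ back inside $\Cc$, which is fine since the $x^i$-derivative of a theta function is again a (combination of) theta function(s) in the same summand — then extend recursively by $\bk{T,a_n}=-na_{n-1}$ and as a derivation of the normal product, and verify $\bk{T,Y(v,z)}=\partial_z Y(v,z)$ using the differential equations (\ref{DiffEquationAlpha}), (\ref{DiffEquationBeta}). (4) Conclude that $V_N$ is a vertex algebra, invoking Bakalov's Theorem \ref{BakalovTheorem} in the form that a translation-covariant local collection of (logarithmic) fields closed under $n$-products generates one, and identifying $V_N$ with the vertex algebra so generated.

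The main obstacle I expect is step (2), specifically controlling the operator product $Y(\vartheta_{n,m},z)\,Y(\vartheta_{n',m'},w)$ and extracting its pole structure. Unlike the torus case, where two exponential fields literally commute and locality is trivial, here the fields carry the nontrivial $W^i\LOGz$ terms and the theta functions are infinite sums over the lattice, so the contraction $\bk{x^i(z)_-,x^j(w)}$ feeds through an exponential that must be reorganized into a single theta-type expression; the convergence and the precise identification of the resulting $\LOG(z-w)$-dependent coefficients (via the formal identities for $\LOG(z_1-z_2)$ recorded after Definition \ref{nProductBakalovLogFields}) is where the real work lies, and it is exactly the computation that the companion Theorem (logarithmic $V_N$-module structure on $\Hc$) will reuse. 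A secondary subtlety is checking that the normally ordered products of these logarithmic fields land in $\Lfield{V_N}$ with the correct conformal weights / $\alpha$-gradings, i.e. that the $z^{-\alpha_0}$ bookkeeping of Section 3 is consistent; but this is bookkeeping rather than a genuine obstruction once the generating fields are in hand.
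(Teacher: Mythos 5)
Your proposal follows essentially the same route as the paper: take the Jacobi theta vectors $F_m=\Theta_m(e^{-(x^2)^2})$ as cyclic generators of the summands $C_{(0,0,n,0)}$, attach to them normally ordered exponential vertex operators in the fields $x^1(z),x^2(z),x^*_3(z)$, extend to all of $\Cc$ by letting $\Gg$ act (irreducibility of $L^2(\mathbbm{R})$ under the Heisenberg algebra), extend to the rest of $V_N$ by normally ordered products, and verify the axioms. Two points of your roadmap, however, misplace where the work actually is. First, the obstacle you flag as the main one --- controlling the operator product of two theta fields and its $\LOG{z_1-z_2}$ structure --- does not arise: all the mode operators entering $Y(F_m,z)$ (the $x^i_n$, $x^*_{3,n}$ and the zero modes) mutually commute, so two theta fields commute outright and their locality is exactly as trivial as in the torus case. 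Moreover $W^i$ and $P_3$ act by zero on $V_N$, so the fields restricted to $V_N$ carry no logarithms at all; $V_N$ is an ordinary vertex algebra and Bakalov's Theorem \ref{BakalovTheorem} is not invoked for this statement (it is reserved for the logarithmic $\Hc$-module structure). Second, your claim that $\bk{\beta_i(z_1), Y(\vartheta_{n,m},z_2)}$ is proportional to $Y(\vartheta_{n,m},z_2)$ times a power of $z_2$ is not correct for $i=1,2$: the commutator produces \emph{new} vertex operators such as $Y(\Theta_m(xe^{-x^2}),z_2)$ and, on the larger space, genuinely logarithmic terms $W^iY(F_m,z_2)\LOG{z_2}$. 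Locality still holds because the whole expression is a coefficient times $\delta(z_1,z_2)$, but the computation is where the real content of the proof lies, together with the inductive verification (which you omit) that $Y(f)v\in V_N(\!(z)\!)$ for every $v$, i.e.\ that these expressions are honest fields. Your treatment of $T$ via the chain rule and equation \eqref{DiffEquationBeta} matches the paper's, including the correction term in $T(x^*_3)=\beta_{3,-1}\vacuum-\alpha^1_{-1}x^2_0\vacuum$.
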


The vertex algebra $V_N$ is the \emph{vertex algebra of chiral differential operators} over the Heisenberg nilmanifold, once again instead of following \cite{malikov1999chiral} we will compute explicitly the quantum fields so it become easier to describe its logarithmic module.

{\noindent \it Remark.} Notice that after the change of coordinates previously done the fields $x^i(z)$ remain invariant but the fields $x^*_i(z)$ don not, specifically $x^*_3(z)$ transforms into
\[
x^*_3 (z) 	= P_3\LOGz + \sum_{i \in \mathbbm{Z}} x^*_{3,i} z^{-i}	+	\LOGz W^1\sum_{i \in \mathbbm{Z}} x^2_i z^{-i}	+ 	\frac{1}{2}\sum_{i,j \in \mathbbm{Z}} x^1_i x^2_j z^{-i-j}	+	\frac{1}{2} W^1 W^2 \left(\LOGz\right)^2,
\]
Moreover $P_3$ and $W^i$ act trivially on $V_N$ so all the terms with logarithms in $x^*_3(z)$ and $x^i(z)$ are zero.

\begin{proof}
	The vacuum vector $\vacuum \in V_N$ will be the $1$ constant function, let us start defining fields for the basis elements: once again the fields associated to elements of the form $a_{-n_{k_1}} \cdots a_{-n_{k_r}} \otimes 1$ with $a_{-n_{k_1}} \in \KacMoody$ will be the {\it same} as for the Kac-Moody vertex algebra, for elements $e^{2\pi \imu \rho_i x^i}$ we define
	\[
	Y(e^{2\pi \imu \rho_i x^i})	=	 :e^{ \left(2 \pi \imu \rho_i x^i(z) \right)}:	=	\exp \left(2 \pi \imu \rho_i x^i(z)_{+} \right) \exp \left(2 \pi \imu \rho_i x^i(z)_{-} \right),		
	\]
	which can also be written as
	\[		
	Y(e^{2\pi \imu \rho_i x^i})	=	e^{2 \pi \imu \rho_i x^i_0}  \exp \left(2 \pi \imu \rho_i \sum_{n<0} x^i_n z^{-n}  \right) \exp \left(2 \pi \imu \rho_i \sum_{n>0} x^i_n z^{-n} \right).
	\]
	
	Denote $F_m \in \Cc$ the image of $e^{- (x^2)^2}$ by $\Theta_m$, i.e. 
	\[
	F_m = \Theta_m(e^{- (x^2)^2})	=	\sum_{k \in \mathbbm{Z}} e^{2 \pi \imu (n k + m) x^1} e^{2 \pi \imu n x^*_3} e^{- (x^2 + k)^2 }, 	
	\]
	and define
	\[
	Y(F_m, z)	=	\sum_{k \in \mathbbm{Z}} :\exp \left( 2 \pi \imu n x^*_3(z) \right) \exp\left( 2 \pi \imu (n k + m) x^1(z) \right)  \exp \left(- (x^2(z))^2 - 2kx^2(z) - k^2 \right):,
	\]	
	since $\bk{x^*_3(z)_{\pm}, x^i(z)_{\mp}} = \bk{x^*_3(z)_{\pm}, \left(x^2(z)\right)_{\mp}} = \bk{x^i(z)_{\pm} , \left(x^2(z)\right)_{\mp}} = 0$ for $i=1,2$ one can write
	\[
	Y(F_m, z)	=	\sum_{k \in \mathbbm{Z}} \exp \left( 2 \pi \imu n x^*_3(z)_+ \right) \exp\left( 2 \pi \imu (n k + m) x^1(z)_+ \right)  \exp \left(- (x^2(z))^2_{+} - 2kx^2(z)_{+} - k^2 \right) \cdot
	\]\[
	\exp \left( 2 \pi \imu n x^*_3(z)_{-} \right) \exp\left( 2 \pi \imu (n k + m) x^1(z)_{-} \right)  \exp \left(- (x^2(z))^2_{-} - 2kx^2(z)_{-} \right),
	\]
	so for every $g \in \Cc$ it holds $Y(F_m) g \in V_N(\!(z)\!)$. Now for $a = \beta_1$, $a = \beta_2$ or $a = \alpha^3$ we would like to define $Y(f)$ for every  $f \in \bigoplus_{n \neq 0} S(\mathbbm{R}) \otimes \Complexn^{\Abs{n}}$ imposing  the following equation
	
	\begin{equation}	\label{EquationForDefiningFieldsofFunctions}
	\bk{a(z_1), Y(f, z_2)}	=	Y(a f, z_2) \delta(z_1, z_2).
	\end{equation}
	Define $Y(a F_m)$ by the formula
	\[
	Y(a F_m)	=	a_0 Y(F_m)	-	Y(F_m) a_0,
	\]
	clearly if $g \in \Cc$ then $Y(a F_m) g	\in V_N(\!(z)\!)$, note that since $L^2(\mathbbm{R})$ is an irreducible $\heisR$-mod then every element $f \in \bigoplus_{n \neq 0} S(\mathbbm{R}) \otimes \Complexn^{\Abs{n}}$ is obtained acting successively on $F_m$ with $\set{\beta_1, \beta_2, \alpha^3}$ and adding, so with the above formula it is proven that $Y(f)g \in V_N(\!(z)\!)$ for every functions $f,g$. It remains to prove that $Y(f) v \in V_N(\!(z)\!)$ for every function $f$ and every $v =  a_{n_k} \ldots a_{n_1} g \in V_N$, this can be done by induction on $k$, the base case ($k=0$) is already proven, for the recursive case just write $v = a_q w$ where $Y(f) w \in V_N(\!(z)\!)$, so expanding the equation \ref{EquationForDefiningFieldsofFunctions} we get
	\[
	Y(f, z_2) a(z_1) w	=	a(z_1) Y(f, z_2)w	-	Y(a_0f, z_2) \delta(z_1, z_2)w,
	\]
	multiplying by $z_1^q$ and taking residues we get
	\begin{eqnarray*}
		\Res_{z_1} z_1^q Y(f, z_2) a(z_1) w	&	=	&	\Res_{z_1} z_1^q a(z_1) Y(f, z_2) w	-	\Res_{z_1} z_1^q Y(a_0f, z_2) \delta(z_1, z_2) w,\\
		Y(f, z_2) a_q w	&	=	&	a_q Y(f, z_2) w	-	z_2^q Y(a_0f, z_2) \delta(z_1, z_2) w,		 	 \\
		Y(f, z_2) x		&	=	&	a_q Y(f, z_2) w	-	z_2^q Y(a_0f, z_2) \delta(z_1, z_2) w.
	\end{eqnarray*}
	but $a_q Y(f, z_2) w \in V_N(\!(z)\!)$	and	$z_2^q Y(a_0f, z_2) \delta(z_1, z_2) w \in V_N(\!(z)\!)$ because the induction hypothesis and then follows the desired result.
	
	From this $Y: V_N \rightarrow \Field{V_N}$ is fully determined since the field for the remaining  vectors is  determined by taking the normally ordered product of the above fields and by linearity.\\
	
	From the previous analysis it is also deduced that $Y(F_m) \vacuum \in V_N[\![z]\!]$ and from this it follows that $Y(v,x) \vacuum  \in V_N[\![z]\!]$ for all $v \in V_N$, and it is clear that $\left. Y(F_m,z) \vacuum \right|_{z=0} = F_m$ so it holds for every element in $V_N$.
	
	The computations to check the locality condition for the fields $\alpha^i(z), \beta_i(z), Y(e^{2\pi \imu \rho_i x^i}, z)$ are analogous to the ones done for the Kac-Moody vertex algebra and  the Theorem \ref{Hf0IsVertexAlgebra}. Since the field $x^3(z)$ commute with itself,  the fields $Y(F_{m_1}, z_1)$ and $Y(F_{m_2}, z_2)$ commute, therefore the only remaining pairs to check are $\set{\alpha^i(z_1), Y(F_m, z_2)}$ and $\set{\beta^i(z_1), Y(F_m, z_2)}$.
	
	From $\bk{\alpha^i_r, x^*_3(z_2)} = \delta_{i,3} z_2^r$ it follows that
	\[
	\bk{\alpha^i_r, e^{2 \pi \imu n x^*_3(z_2)}}	=	 2 \pi \imu n z_2^r e^{2 \pi \imu n x^*_3(z_2)} \delta_{i,3},
	\]
	and so
	\begin{eqnarray*}
		\bk{\alpha^i_r, Y(F_m, z_2)}	&	=	&	\scalebox{0.85}{$	\bk{\alpha^i_r, \sum_{k \in \mathbbm{Z}} \exp \left( 2 \pi \imu n x^*_3(z_2) \right) \exp\left( 2 \pi \imu (n k + m) x^1(z_2) \right)  \exp \left(- (x^2(z_2))^2 - 2kx^2(z_2) - k^2 \right) }	$}	\\
		&	=	&	\scalebox{0.85}{$	\sum_{k \in \mathbbm{Z}} \bk{\alpha^i_r,\exp \left( 2 \pi \imu n x^*_3(z_2) \right)} \exp\left( 2 \pi \imu (n k + m) x^1(z_2) \right)  \exp \left(- (x^2(z))^2 - 2kx^2(z_2) - k^2 \right)	$}	\\
		&	+	&	\scalebox{0.85}{$	\sum_{k \in \mathbbm{Z}} \exp \left( 2 \pi \imu n x^*_3(z_2) \right) \bk{\alpha^i_r,\exp\left( 2 \pi \imu (n k + m) x^1(z_2) \right)}  \exp \left(- (x^2(z_2))^2 - 2kx^2(z_2) - k^2 \right)		$}	\\
		&	+	&	\scalebox{0.85}{$	\sum_{k \in \mathbbm{Z}} \exp \left( 2 \pi \imu n x^*_3(z_2) \right) \exp\left( 2 \pi \imu (n k + m) x^1(z_2) \right)  \bk{\alpha^i_r, \exp \left(- (x^2(z_2))^2 - 2kx^2(z_2) - k^2 \right)}	$}	\\
		&	=	&	\scalebox{0.85}{$	\sum_{k \in \mathbbm{Z}} \bk{\alpha^i_r,\exp \left( 2 \pi \imu n x^*_3(z_2) \right)} \exp\left( 2 \pi \imu (n k + m) x^1(z_2) \right)  \exp \left(- (x^2(z_2))^2 - 2kx^2(z) - k^2 \right)		$}	\\
		&	=	&	\scalebox{0.85}{$	2 \pi \imu n z_2^r \delta_{i,3} \sum_{k \in \mathbbm{Z}} e^{2 \pi \imu n x^*_3(z_2)}  \exp\left( 2 \pi \imu (n k + m) x^1(z_2) \right)  \exp \left(- (x^2(z_2))^2 - 2kx^2(z_2) - k^2 \right)		$}	\\	
		&	=	&	2 \pi \imu n z_2^r \delta_{i,3} Y(F_m, z_2),
	\end{eqnarray*}

	so finally this leads to
	\begin{eqnarray*}
		\bk{\alpha^i(z_1), Y(F_m, z_2)}		&	=	&	\sum_{r \in \mathbbm{Z}} \bk{\alpha^i_r, Y(F_m, z_2)} z_1^{-1-r}	\\	
		&	=	&	2 \pi \imu n \delta_{i,3} Y(F_m, z_2) \sum_{r \in \mathbbm{Z}}  z_2^r  z_1^{-1-r}	=	2 \pi \imu n \delta_{i,3} Y(F_m, z_2) \delta(z_1, z_2).
	\end{eqnarray*}
	So the fields $\alpha^i(z_1), Y(F_m, z_2)$ are a local pair. 
	
	From \ref{TrivialCommutationRelation} and $\ref{NoTrivialCommutationRelation}$ follows that $\beta_3(z_1)$ and $Y(F_m, z_2)$ are a local pair.
	
	Let's prove that $\beta_2(z_1)$ and $Y(F_m, z_2)$ are local, from the relations proven in section \ref{section2.01} we deduce 
	\begin{eqnarray*}
		\bk{\beta_{2,r}, x^*_3(z_2)}					&	=	&	W^1 z_2^r \LOG{z_2},	\\
		\bk{\beta_{2,r}, x^1(z_2)}						&	=	&	0,				\\
		\bk{\beta_{2,r}, x^2(z_2)}						&	=	&	z_2^r K,		\\
		\bk{\beta_{2,r}, \left(x^2(z_2) \right)^2}		&	=	&	z_2^r	x^2(z_2),	
	\end{eqnarray*}
	from this follows
	\begin{eqnarray*}
		\bk{\beta_{2,r}, e^{2 \pi \imu n x^*_3(z_2)}}		&	=	&	2 \pi \imu n W^1 e^{2 \pi \imu nx^*_3(z_2)} z_2^r \LOG{z_2},	\\
		\bk{\beta_{2,r}, e^{2 \pi \imu (n k+m) x^1(z_2)}}	&	=	&	0,				\\
		\bk{\beta_{2,r}, e^{- 2 k x^2(z_2)}}				&	=	&	-2 k z_2^r e^{- 2 k x^2(z_2)},		\\
		\bk{\beta_{2,r}, e^{- \left( x^2(z_2) \right)^2}}	&	=	&	-2 z_2^r x^2(z_2) e^{- \left( x^2(z_2) \right)^2},	
	\end{eqnarray*}
	which finally leads to 
	\begin{eqnarray*}
		\bk{\beta_2 (z_1), e^{2 \pi \imu n x^*_3(z_2)}}		&	=	&	2 \pi \imu n W^1 e^{2 \pi \imu n x^*_3(z_2)} \LOG{z_2}	\delta(z_1, z_2),	\\
		\bk{\beta_2 (z_1), e^{2 \pi \imu (n k+m) x^1(z_2)}}	&	=	&	0,				\\
		\bk{\beta_2 (z_1), e^{- 2 k x^2(z_2)}}				&	=	&	-2 k e^{- 2 k x^2(z_2)}	\delta(z_1, z_2),		\\
		\bk{\beta_2 (z_1), e^{- \left( x^2(z_2) \right)^2}}	&	=	&	-2 x^2(z_2) e^{- \left( x^2(z_2) \right)^2}	\delta(z_1, z_2).	
	\end{eqnarray*}
	
	Now we can compute the brackets
	\begin{eqnarray*}
		\bk{\beta_2(z_1), Y(F_m, z_2)}		&	=	&	\scalebox{0.85}{$	\bk{\beta_2(z_1), \sum_{k \in \mathbbm{Z}} \exp \left( 2 \pi \imu n x^*_3(z_2) \right) \exp\left( 2 \pi \imu (n k + m) x^1(z_2) \right)  \exp \left(- (x^2(z_2))^2 - 2kx^2(z_2) - k^2 \right) }	$}	\\
		&	=	&	\scalebox{0.85}{$	\sum_{k \in \mathbbm{Z}} \bk{\beta_2(z_1),\exp \left( 2 \pi \imu n x^*_3(z_2) \right)} \exp\left( 2 \pi \imu (n k + m) x^1(z_2) \right)  \exp \left(- (x^2(z))^2 - 2kx^2(z_2) - k^2 \right)	$}	\\
		&	+	&	\scalebox{0.81}{$	\sum_{k \in \mathbbm{Z}} \exp \left( 2 \pi \imu n x^*_3(z_2) \right) \bk{\beta_2(z_1),\exp\left( 2 \pi \imu (n k + m) x^1(z_2) \right)}  \exp \left(- (x^2(z_2))^2 - 2kx^2(z_2) - k^2 \right)		$}	\\
		&	+	&	\scalebox{0.81}{$	\sum_{k \in \mathbbm{Z}} \exp \left( 2 \pi \imu n x^*_3(z_2) \right) \exp\left( 2 \pi \imu (n k + m) x^1(z_2) \right)  \bk{\beta_2(z_1), \exp \left(- (x^2(z_2))^2 \right)} \exp \left(- 2kx^2(z_2) - k^2 \right) 	$}	\\
		&	+	&	\scalebox{0.83}{$	\sum_{k \in \mathbbm{Z}} \exp \left( 2 \pi \imu n x^*_3(z_2) \right) \exp\left( 2 \pi \imu (n k + m) x^1(z_2) \right)   \exp \left(- (x^2(z_2))^2 \right) \bk{\beta_2(z_1), \exp \left(- 2kx^2(z_2)  \right)} \exp(- k^2) 	$}	\\
		&	=	&	\scalebox{0.80}{$ 2 \pi \imu n W^1 \LOG{z_2} \delta(z_1,z_2)	\sum_{k \in \mathbbm{Z}} \exp \left( 2 \pi \imu n x^*_3(z_2) \right) \exp\left( 2 \pi \imu (n k + m) x^1(z_2) \right)  \exp \left(- (x^2(z))^2 - 2kx^2(z_2) - k^2 \right)	$}	\\
		&	+	&	\scalebox{0.81}{$ -2  \delta(z_1,z_2) x^2(z_2)	\sum_{k \in \mathbbm{Z}} \exp \left( 2 \pi \imu n x^*_3(z_2) \right) \exp\left( 2 \pi \imu (n k + m) x^1(z_2) \right)  \exp \left(- (x^2(z))^2 - 2kx^2(z_2) - k^2 \right) 	$}	\\
		&	+	&	\scalebox{0.81}{$ -2 \delta(z_1,z_2)	\sum_{k \in \mathbbm{Z}} k \exp \left( 2 \pi \imu n x^*_3(z_2) \right) \exp\left( 2 \pi \imu (n k + m) x^1(z_2) \right)   \exp \left(- (x^2(z))^2 - 2kx^2(z_2) - k^2 \right) 	$}	\\	
		&	=	&	2 \pi \imu n W^1 Y(F_m, z_2) \LOG{z_2} \delta(z_1,z_2)	-	2 Y\left(\Theta_m(xe^{-x^2}), z_2 \right)	\delta(z_1,z_2),
	\end{eqnarray*}
	
	therefore $(z_1 - z_2)\bk{\beta_2(z_1), Y(F_m, z_2)} = 0$, so they are local.

	Let's prove that $\beta_1(z_1)$ and $Y(F_m, z_2)$ are local, once again from the relations proven in section \ref{section2.01} we get 
	\[
	\bk{\beta_{1,r}, x^*_3(z_2)}	=	z_2^r (x^2(z_2)	-	W^2 \LOG{z_2})	=	z_2^r	\tilde{x}^2(z_2),
	\]
	here we use the notation $\tilde{x}^i (z) = x^i(z) - W^i \LOG{z}	=	\sum_{n} x^i_n z^{-n}$, it holds
	\begin{eqnarray*}
		\bk{\beta_{1,r}, x^1(z_2)}						&	=	&	z_2^r K,		\\
		\bk{\beta_{1,r}, x^2(z_2)}						&	=	&	0,				\\
		\bk{\beta_{1,r}, \left(x^2(z_2) \right)^2}		&	=	&	0,	
	\end{eqnarray*}
	
	which means
	\begin{eqnarray*}
		\bk{\beta_{1,r}, e^{2 \pi \imu n x^*_3(z_2)}}		&	=	&	2 \pi \imu n z_2^r \tilde{x}^2(z_2) e^{2 \pi \imu n x^*_3(z_2)},	\\
		\bk{\beta_{1,r}, e^{2 \pi \imu (n k+m) x^1(z_2)}}	&	=	&	2 \pi \imu (nk+m) z_2^r e^{2 \pi \imu (nk+m) x^1(z_2)},				\\
		\bk{\beta_{1,r}, e^{- 2 k x^2(z_2)}}				&	=	&	0,		\\
		\bk{\beta_{1,r}, e^{- \left( x^2(z_2) \right)^2}}	&	=	&	0,
	\end{eqnarray*}
	
	and this translates into
	\begin{eqnarray*}
		\bk{\beta_1 (z_1), e^{2 \pi \imu n x^*_3(z_2)}}		&	=	&	2 \pi \imu n  \tilde{x}^2(z_2) \delta(z_1, z_2),	\\
		\bk{\beta_1 (z_1), e^{2 \pi \imu (n k+m) x^1(z_2)}}	&	=	&	2 \pi \imu (n k + m) e^{2 \pi \imu (n k + m) x^1(z_2)} \delta(z_1,z_2),				\\
		\bk{\beta_1 (z_1), e^{- 2 k x^2(z_2)}}				&	=	&	0,		\\
		\bk{\beta_1 (z_1), e^{- \left( x^2(z_2) \right)^2}}	&	=	&	0.	
	\end{eqnarray*}

	Now the commutator of the fields is computed
	\begin{eqnarray*}
		\bk{\beta_1(z_1), Y(F_m, z_2)}		&	=	&	\scalebox{0.85}{$	\bk{\beta_1(z_1), \sum_{k \in \mathbbm{Z}} \exp \left( 2 \pi \imu n x^*_3(z_2) \right) \exp\left( 2 \pi \imu (n k + m) x^1(z_2) \right)  \exp 	\left(- (x^2(z_2))^2 - 2kx^2(z_2) - k^2 \right) }	$}	\\
		&	=	&	\scalebox{0.85}{$	\sum_{k \in \mathbbm{Z}} \bk{\beta_1(z_1),\exp \left( 2 \pi \imu n x^*_3(z_2) \right)} \exp\left( 2 \pi \imu (n k + m) x^1(z_2) \right)  \exp \left(- (x^2(z))^2 - 2kx^2(z_2) - k^2 \right)	$}	\\
		&	+	&	\scalebox{0.81}{$	\sum_{k \in \mathbbm{Z}} \exp \left( 2 \pi \imu n x^*_3(z_2) \right) \bk{\beta_1(z_1),\exp\left( 2 \pi \imu (n k + m) x^1(z_2) \right)}  \exp \left(- (x^2(z_2))^2 - 2kx^2(z_2) - k^2 \right)		$}	\\
		&	+	&	\scalebox{0.81}{$	\sum_{k \in \mathbbm{Z}} \exp \left( 2 \pi \imu n x^*_3(z_2) \right) \exp\left( 2 \pi \imu (n k + m) x^1(z_2) \right)  \bk{\beta_1(z_1), \exp \left(- (x^2(z_2))^2 \right)} \exp \left(- 2kx^2(z_2) - k^2 \right) 	$}	\\
		&	+	&	\scalebox{0.83}{$	\sum_{k \in \mathbbm{Z}} \exp \left( 2 \pi \imu n x^*_3(z_2) \right) \exp\left( 2 \pi \imu (n k + m) x^1(z_2) \right)   \exp \left(- (x^2(z_2))^2 \right) \bk{\beta_1(z_1), \exp \left(- 2kx^2(z_2)  \right)} \exp(- k^2) 	$}	\\
		&	=	&	\scalebox{0.80}{$ 2 \pi \imu \tilde{x}^2(z_2) \delta(z_1,z_2)	\sum_{k \in \mathbbm{Z}} \exp \left( 2 \pi \imu n x^*_3(z_2) \right) \exp\left( 2 \pi \imu (n k + m) x^1(z_2) \right)  \exp \left(- (x^2(z))^2 - 2kx^2(z_2) - k^2 \right)	$}	\\
		&	+	&	\scalebox{0.81}{$ 2 \pi \imu (nk+m)  \delta(z_1,z_2)	\sum_{k \in \mathbbm{Z}} \exp \left( 2 \pi \imu n x^*_3(z_2) \right) \exp\left( 2 \pi \imu (n k + m) x^1(z_2) \right)  \exp \left(- (x^2(z))^2 - 2kx^2(z_2) - k^2 \right) 	$}	\\
		&	=	&	2 \pi \imu n \tilde{x}^2(z_2) Y(F_m, z_2) \delta(z_1,z_2)	+	2 \pi \imu m Y(F_m, z_2)	\delta(z_1,z_2)	-	n x^2(z_2) Y(F_m, z_2) \delta(z_1, z_2) \\
		&	+	&	\scalebox{0.8}{$ n2 \pi \imu (x^2(z_2) + k)  \delta(z_1,z_2)	\sum_{k \in \mathbbm{Z}} \exp \left( 2 \pi \imu n x^*_3(z_2) \right) \exp\left( 2 \pi \imu (n k + m) x^1(z_2) \right)  \exp \left(- (x^2(z))^2 - 2kx^2(z_2) - k^2 \right) 	$}\\
		&	=	&	\scalebox{0.85}{$ - 2 \pi \imu n W^2 Y(F_m, z_2) \LOG{z_2} \delta(z_1,z_2)	+	2 \pi \imu m Y(F_m, z_2) \delta(z_1,z_2) +	 2 \pi \imu nY( \Theta_m(x e^{-x^2}), z_2) \delta(z_1, z_2).$}
	\end{eqnarray*}
	
	The locality condition for the remaining fields follows from Dong's Lemma.\\
	

	Now it is only left to prove the translation invariance of the fields, we will proceed similarly to the proof of Theorem \ref{Hf0IsVertexAlgebra}, let us define the translation endomorphism $T$ in $V_N$. We already know how to define $T(x^i)$ and proceeding exactly as in the proof of \ref{Hf0IsVertexAlgebra} we get that the fields $e^{2 \pi \imu \rho_i x^i(z)}$ satisfy the translation invariance condition.
	
	For $T(x^*_3)$ the situation is similar but slightly more complicated, once again a vector such that $Y( T(x^*_3), z) = \partial_z Y(x^*_3,z)$ is needed, but unfortunately the equation \ref{DiffEquationBeta} is a little bit more complicated. We start noticing that after the change of coordinates we made the equation \ref{DiffEquationBeta}  was transformed into
	\begin{equation*} \label{DiffEquationBetaMorphed}
	D_z x^*_i(z)	= \beta_i (z) - \Hijk x^k(z) D_z x^j(z),
	\end{equation*}
	so taking $i=3$, acting on the vacuum vector and evaluating $z=0$ it becomes clear that $T(x^*_3)$ should be defined as 
	\[
	T(x^*_3)	=	\beta_{3,-1} \vacuum - \alpha^1_{-1} x^2_0 \vacuum,
	\]
	and force the commutation relation 
	\[
	\bk{T, x^*_{3,0}}	=	\beta_{3,-1}	-	\frac{\epsilon_{3jk}}{2} \sum_m m x^j_{-1 -m} x^k_m.
	\]
	
	Now it is easy to define $T$ on any function as
	\[
	T(f)	=	\partial_{x^1} f T(x^1)	+	\partial_{x^2} f T(x^2)	+	\partial_{x^*_3} f T(x^*_3).
	\]
	To make computations easier here we will actually use the fact that $W^1$, $W^2$ and $P^3$ act by zero so we have no logarithms in the fields.
	
	Consider the field $\tilde{x}^*_3 (z)	=	\sum_n x^*_3 z^{-n}$, it is convenient to prove that translation invariance holds for the field $\tilde{x}^*_3 (z)$, for $n \neq 0$ we have
	\begin{eqnarray*}
		\bk{T, x^*_{3.n}}	&	=	&	\bk{T, \frac{-\beta_{3,n}}{n} +  \frac{\epsilon_{3jk}}{2n} \sum_m m x^j_{n-m} x^k_{m} }		\\
		&	=	&	-\frac{1}{n}\bk{T, \beta_{3,n}}		+	\frac{\epsilon_{3jk}}{2n}	\sum_m m \bk{T, x^j_{n-m} x^k_{m} }		\\
		&	=	&	\beta_{3,n-1}	+	\frac{\epsilon_{3jk}}{2n}	\sum_m m \bk{T, x^j_{n-m}} x^k_{m} 	+	\frac{\epsilon_{3jk}}{2n}	\sum_m m x^j_{n-m} \bk{T, x^k_{m} }	\\
		&	=	&	\beta_{3,n-1}	+	\frac{\epsilon_{3jk}}{2n}	\sum_m m \alpha^j_{n-m-1} x^k_{m} 	+	\frac{\epsilon_{3jk}}{2n}	\sum_m m x^j_{n-m}  \alpha^k_{m-1}	\\
		&	=	&	\beta_{3,n-1}	-	\frac{\epsilon_{3jk}}{2n}	\sum_m m(n-m-1) x^j_{n-m-1} x^k_{m} 	-	\frac{\epsilon_{3jk}}{2n}	\sum_m m(m-1) x^j_{n-m}  x^k_{m-1}	\\
		&	=	&	\beta_{3,n-1}	-	\frac{\epsilon_{3jk}}{2n}	\sum_m m(n-m-1) x^j_{n-m-1} x^k_{m} 	-	\frac{\epsilon_{3jk}}{2n}	\sum_m (m+1)m x^j_{n-m-1}  x^k_{m}	\\
		&	=	&	\beta_{3,n-1}	-	\frac{\epsilon_{3jk}}{2}	\sum_m m x^j_{n-m-1} x^k_{m},
	\end{eqnarray*}
	then $\bk{T, \tilde{x}^*_3 (z)}$ expands as
	\begin{eqnarray*}
		\bk{T, \tilde{x}^*_{3}(z)}	&	=	&	\sum_n \bk{T, x^*_{3,n}} z^{-n}		\\
		&	=	&	\sum_n \beta_{3,n-1} z^{-n}	-	\frac{\epsilon_{3jk}}{2} \sum_n \sum_m m x^j_{n-m-1} x^k_{m} z^{-n}		\\		
		&	=	&	\sum_n \beta_{3,n} z^{-n-1}	-	\frac{\epsilon_{3jk}}{2} \sum_n \sum_m m x^j_{n-m} x^k_{m} z^{-n-1}		\\		
		&	=	&	\beta_3(z)	-	\frac{\epsilon_{3jk}}{2} \sum_n \sum_m  x^j_{n-m} z^{-n+m} m x^k_{m} z^{-m-1}		\\
		&	=	&	\beta_3(z)	+	\frac{\epsilon_{3jk}}{2} \left(\sum_n x^j_{n} z^{-n} \right)  \left( \sum_n -n x^k_{n} z^{-n-1}	\right)	\\
		&	=	&	\beta_3(z)	+	\frac{\epsilon_{3jk}}{2} x^j(z)  \partial_z  x^k(z)	\\
		&	=	&	\partial_z \tilde{x}^*_3(z),
	\end{eqnarray*}
	here the last equality hold because \ref{DiffEquationBeta} and $\tilde{x}^*_3(z)$ coincides with the $x^*_3(z)$ as defined in section \ref{section2.01} when setting the formal variable $\LOGz = 0$, i.e., when deleting all terms with $P_3$, $W^1$ and $W^2$. Now the field $x^*_3(z)$ after the change of coordinates (without the logarithmic terms) can be written as:
	\[
	x^*_3(z)	=	\tilde{x}^*_3(z)	+	\frac{1}{2} x^1(z)x^2(z),
	\]
	but now it becomes easy to prove translation invariance for $x^*_3(z)$ as we already know it holds for $x^1(z)$ and $x^2(z)$
	\begin{eqnarray*}
		\bk{T, x^*_{3}(z)}	&	=	&	\bk{T, \tilde{x}^*_3(z)}	+	\frac{1}{2} \bk{T, x^1(z)x^2(z)}		\\
		&	=	&	\partial_z \tilde{x}^*_3(z)	+	\frac{1}{2} \bk{T, x^1(z)}x^2(z)	+	\frac{1}{2} x^1(z) \bk{T, x^2(z)}		\\
		&	=	&	\partial_z \tilde{x}^*_3(z)	+	\frac{1}{2} \partial_z x^1(z)x^2(z)	+	\frac{1}{2} x^1(z) \partial_z x^2(z)	\\
		&	=	&	\partial_z \left(	\tilde{x}^*_3(z)	+	\frac{1}{2} x^1(z)x^2(z)	\right)	\\
		&	=	&	\partial_z x^*_3(z).
	\end{eqnarray*}
	
	Finally we have the tools for proving the translation invariance condition for the fields $Y(F_m, z)$
	\begin{eqnarray*}
		\bk{T, Y(F_m, z)}	&	=	&	\bk{T, \sum_{k \in \mathbbm{Z}} \exp \left( 2 \pi \imu n x^*_3(z) \right) \exp\left( 2 \pi \imu (n k + m) x^1(z) \right)  \exp \left(- (x^2(z))^2 - 2kx^2(z) - k^2 \right) }		\\
		&	=	&	\sum_{k \in \mathbbm{Z}} \bk{T, \exp \left( 2 \pi \imu n x^*_3(z) \right)} \exp\left( 2 \pi \imu (n k + m) x^1(z) \right)  \exp \left(- (x^2(z))^2 - 2kx^2(z) - k^2 \right)	\\	
		&	+	&	\sum_{k \in \mathbbm{Z}} \exp \left( 2 \pi \imu n x^*_3(z) \right) \bk{T, \exp\left( 2 \pi \imu (n k + m) x^1(z) \right)}  \exp \left(- (x^2(z))^2 - 2kx^2(z) - k^2 \right)	\\
		&	+	&	\sum_{k \in \mathbbm{Z}} \exp \left( 2 \pi \imu n x^*_3(z) \right) \exp\left( 2 \pi \imu (n k + m) x^1(z) \right)  \bk{T, \exp \left(- (x^2(z))^2 - 2kx^2(z) - k^2 \right)}	\\
		&	=	&	\scalebox{0.85}{$	\sum_{k \in \mathbbm{Z}} 2 \pi \imu n\bk{T, x^*_3(z)} \exp \left( 2 \pi \imu n x^*_3(z) \right) \exp\left( 2 \pi \imu (n k + m) x^1(z) \right)  \exp \left(- (x^2(z))^2 - 2kx^2(z) - k^2 \right)	 $}		\\	
		&	+	&	\scalebox{0.85}{$	\sum_{k \in \mathbbm{Z}} 2 \pi \imu (n k + m)\exp \left( 2 \pi \imu n x^*_3(z) \right) \bk{T, x^1(z)} \exp\left( 2 \pi \imu (n k + m) x^1(z) \right)  \exp \left(- (x^2(z))^2 - 2kx^2(z) - k^2 \right)	$}	\\
		&	+	&	\scalebox{0.85}{$	\sum_{k \in \mathbbm{Z}} -2(x^2(z)+k) \exp \left( 2 \pi \imu n x^*_3(z) \right) \exp\left( 2 \pi \imu (n k + m) x^1(z) \right)  \bk{T, x^2(z)}\exp \left(- (x^2(z))^2 - 2kx^2(z) - k^2 \right)		$}		\\
		&	=	&	\scalebox{0.85}{$	\sum_{k \in \mathbbm{Z}} 2 \pi \imu n \partial_z \left( x^*_3(z) \right) \exp \left( 2 \pi \imu n x^*_3(z) \right) \exp\left( 2 \pi \imu (n k + m) x^1(z) \right)  \exp \left(- (x^2(z))^2 - 2kx^2(z) - k^2 \right)	 $}		\\	
		&	+	&	\scalebox{0.85}{$	\sum_{k \in \mathbbm{Z}} 2 \pi \imu (n k + m)\exp \left( 2 \pi \imu n x^*_3(z) \right) \partial_z \left( x^1(z) \right) \exp\left( 2 \pi \imu (n k + m) x^1(z) \right)  \exp \left(- (x^2(z))^2 - 2kx^2(z) - k^2 \right)	$}	\\
		&	+	&	\scalebox{0.85}{$	\sum_{k \in \mathbbm{Z}} -2(x^2(z)+k) \exp \left( 2 \pi \imu n x^*_3(z) \right) \exp\left( 2 \pi \imu (n k + m) x^1(z) \right)  \partial_z \left( x^2(z)\right)  \exp \left(- (x^2(z))^2 - 2kx^2(z) - k^2 \right)		$}		\\
		&	=	&	\scalebox{0.85}{$	\sum_{k \in \mathbbm{Z}} \partial_z \left(  \exp \left( 2 \pi \imu n x^*_3(z) \right) \right) \exp\left( 2 \pi \imu (n k + m) x^1(z) \right)  \exp \left(- (x^2(z))^2 - 2kx^2(z) - k^2 \right)	 $}		\\	
		&	+	&	\scalebox{0.85}{$	\sum_{k \in \mathbbm{Z}} \exp \left( 2 \pi \imu n x^*_3(z) \right) \partial_z \left(  \exp\left( 2 \pi \imu (n k + m) x^1(z) \right) \right)  \exp \left(- (x^2(z))^2 - 2kx^2(z) - k^2 \right)	$}	\\
		&	+	&	\scalebox{0.85}{$	\sum_{k \in \mathbbm{Z}} \exp \left( 2 \pi \imu n x^*_3(z) \right) \exp\left( 2 \pi \imu (n k + m) x^1(z) \right)  \partial_z \left(   \exp \left(- (x^2(z))^2 - 2kx^2(z) - k^2 \right)\right)		$}		\\
		&	=	&	\partial_z \left( \sum_{k \in \mathbbm{Z}} \exp \left( 2 \pi \imu n x^*_3(z) \right) \exp\left( 2 \pi \imu (n k + m) x^1(z) \right)  \exp \left(- (x^2(z))^2 - 2kx^2(z) - k^2 \right)  \right)		\\
		&	=	&	\partial_z Y(F_m, z).
	\end{eqnarray*}

	
\end{proof}

We have the embeddings
\[
\Complexn	\subset	C^{\infty} (N)	\subset		C^{\infty}(G / \Gamma),
\]
which leads to
\[
V^1(\Gg) \vacuum	\subset V_N \subset		\Hc,
\]
so it is expected that $\Hc$ is a logarithmic module over $V_N$.

\begin{theorem}	\label{HF1Module}
	The space $\Hc$ has the structure of logarithmic $V_N$-module.
\end{theorem}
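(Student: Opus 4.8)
The plan is to follow the proof of Theorem~\ref{HF0Module} step by step, this time \emph{keeping} all logarithmic terms, and to organize the outcome through Bakalov's formalism. By the corollary following Theorem~\ref{BakalovTheorem} it suffices to exhibit a pairwise local collection $\mathcal{W}\subseteq\Lfield{\Hc}$ of logarithmic fields together with a morphism of vertex algebras $V_N\to\overline{\mathcal{W}}$. I would take precisely the field assignments used in the proof of Theorem~\ref{Hf1IsVertexAlgebra}: $Y(\alpha^i_{-1}\vacuum,z)=\alpha^i(z)$, $Y(\beta_{i,-1}\vacuum,z)=\beta_i(z)$, $Y(e^{2\pi\imu\rho_i x^i},z)=\NormallyOP{e^{2\pi\imu\rho_i x^i(z)}}$, the explicit series for $Y(F_m,z)$, and the extension to the remaining vectors by normally ordered products and, on the functions of $\Cc$, by the relation~\ref{EquationForDefiningFieldsofFunctions}. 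The one essential change is that now the operators $x^i_n,x^*_{i,n},W^i,P_i$ are read as operators on $\Hc$ rather than on $V_N$: on $\Hc$ neither $W^i$ nor $P_3$ acts by zero, so the logarithms in $x^i(z)$, in $x^*_3(z)$ and hence in $Y(F_m,z)$ genuinely survive, and the resulting objects lie in $\Lfield{\Hc}$, in general in components with a nonzero exponent determined by the action of $P_3$.

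First I would check that these assignments really land in $\Lfield{\Hc}$, i.e.\ that $Y(v,z)w$ is a logarithmic field value for every $v\in V_N$ and every $w\in\Hc$. For the generators this is immediate from the explicit formulas, using that $\alpha^i_n$ and $\beta_{i,n}$ with $n>0$ act by zero on $\Hc$; for a general vector $v=a_{q_1}\cdots a_{q_r}g$ one runs the same induction on $r$ as in the proof of Theorem~\ref{Hf1IsVertexAlgebra}, expanding the relation~\ref{EquationForDefiningFieldsofFunctions}, multiplying by $z_1^q$ and taking residues in $z_1$, now with $\Hc$ in place of $V_N$. Next I would establish pairwise locality of $\mathcal{W}=\set{\alpha^i(z),\ \beta_i(z),\ Y(e^{2\pi\imu\rho_i x^i},z),\ Y(F_m,z)}$. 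As in the proof of Theorem~\ref{HF0Module}, one observes that none of the bracket computations in the proof of Theorem~\ref{Hf1IsVertexAlgebra} --- those of $\bk{\alpha^i(z_1),Y(F_m,z_2)}$, $\bk{\beta_3(z_1),Y(F_m,z_2)}$, $\bk{\beta_2(z_1),Y(F_m,z_2)}$ and $\bk{\beta_1(z_1),Y(F_m,z_2)}$, together with those among $\alpha^i,\beta_i,Y(e^{2\pi\imu\rho_i x^i})$ --- used that the logarithmic terms vanish; they are identities between logarithmic fields, valid verbatim on $\Hc$, and each produces $(z_1-z_2)\bk{a(z_1),b(z_2)}=0$. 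The fields $Y(F_{m_1},z_1)$ and $Y(F_{m_2},z_2)$ commute because $x^3(z)$ commutes with itself. Every further pair of fields in $\overline{\mathcal{W}}$ is then local by the logarithmic Dong lemma~\ref{DongsLemmaLogModules}, so $\overline{\mathcal{W}}$ is a vertex algebra by Theorem~\ref{BakalovTheorem}.

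It then remains to verify that $Y\colon V_N\to\overline{\mathcal{W}}$ is a vertex algebra morphism, i.e.\ $Y(\vacuum)=\id$ and $Y(a_{(n)}b)=Y(a)_{(n)}Y(b)$ for all $n\in\mathbbm{Z}$. The vacuum identity is clear. For $n<0$ it follows, exactly as in the proof of Theorem~\ref{HF0Module}, from the facts that the $(-1)$-product of logarithmic fields is the normally ordered product, that $Y$ was defined on non-generators through normal ordering, and from~\ref{DerivativeNProduct}. For $n\geq 0$ it is enough, since $V_N$ is generated by the $F_m$, the $e^{2\pi\imu\rho_i x^i}$ and the Kac--Moody subalgebra, to check the identity on generating pairs. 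The pairs not involving the $F_m$ are treated as in Theorem~\ref{HF0Module}. For the $F_m$: $\bk{\alpha^i(z_1),Y(F_m,z_2)}=2\pi\imu n\,\delta_{i,3}\,Y(F_m,z_2)\,\delta(z_1,z_2)$ gives $\alpha^i(z)_{(0)}Y(F_m,z)=Y(\alpha^i_{-1}\vacuum_{(0)}F_m,z)$ and vanishing of the higher products, and similarly for $\beta_3$; for $\beta_1$ and $\beta_2$ the brackets computed in the proof of Theorem~\ref{Hf1IsVertexAlgebra} --- which on $\Hc$ keep a $W^2\LOG{z_2}$, respectively $W^1\LOG{z_2}$, term along with their $x^2(z_2)$- and $Y(\Theta_m(xe^{-x^2}),z_2)$-terms --- have to be matched, via the logarithmic $n$-product~\ref{nProductBakalovLogFields} with $D_{z_1}$ in place of $\partial_{z_1}$, against $Y(\beta_{i,-1}\vacuum_{(0)}F_m,z)$; the pair $(Y(F_{m_1}),Y(F_{m_2}))$ is trivial.

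The step I expect to be the real obstacle is this last matching for $\beta_1$ and $\beta_2$. On $V_N$ the logarithms were annihilated, so there the $0$-product identity was comparatively cheap; on $\Hc$ one must show that the surviving $W^1\LOG{z_2}$ and $W^2\LOG{z_2}$ contributions to $\bk{\beta_i(z_1),Y(F_m,z_2)}$ are exactly those obtained by applying $Y$ to $\beta_i F_m\in\Cc$, and the field $Y(\beta_i F_m,z)$ was itself defined only implicitly through~\ref{EquationForDefiningFieldsofFunctions}. Establishing that~\ref{EquationForDefiningFieldsofFunctions} is a consistent definition on all of $\Cc$ --- independent of the way a function is written as a sum of $\beta_1$-, $\beta_2$- and $\alpha^3$-translates of the $F_m$ --- and that it indeed produces honest logarithmic fields on $\Hc$ with the expected monodromy is where the real work lies, and where the non-triviality of the logarithmic structure (the unavoidable $W^i\LOGz$ terms) is felt. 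If on top of this one wants the $e^{2\pi\imu D_{\LOGz}}$-twisted structure, one should also check $Y(\varphi a)=e^{2\pi\imu D_{\LOGz}}Y(a)$ for the automorphism $\varphi$ of $V_N$ coming from the monodromy $x^*_3\mapsto x^*_3+\mathrm{const}$ around $z=0$; the plain logarithmic $V_N$-module statement, though, does not need this.
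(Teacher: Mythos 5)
Your proposal follows essentially the same route as the paper: the same field assignments carried over from the proof of Theorem~\ref{Hf1IsVertexAlgebra}, the observation that the locality computations there never used the vanishing of the logarithmic terms and hence hold verbatim on $\Hc$, and the verification of the $n$-product identity on the generating pairs $(\alpha^i,F_m)$, $(\beta_i,F_m)$, $(F_{m_1},F_{m_2})$ --- your packaging through the corollary to Theorem~\ref{BakalovTheorem} is equivalent to the paper's direct check of the module axioms. Your closing remarks (consistency of the definition via~\ref{EquationForDefiningFieldsofFunctions} and the twisted structure) flag genuine points that the paper's proof also leaves implicit, but they do not alter the argument.
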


\begin{proof}
	We must define a logarithmic module for each vector of $V_N$, set
	\begin{eqnarray*}
		Y( e^{2\pi \imu \rho_i x^i}, z)		&	=	&	e^{2\pi \imu \rho_i x^i(z)},	\\
		Y( \alpha^i_{-1}\vacuum, z)			&	=	&	\alpha^i(z),	\\ 
		Y( \beta_{i,-1}\vacuum, z)			&	=	&	\beta_i(z),		\\
		Y(F_m, z)							&	=	&	\sum_{k \in \mathbbm{Z}} \exp \left( 2 \pi \imu n x^*_3(z) \right) \exp\left( 2 \pi \imu (n k + m) x^1(z) \right)  \exp \left(- (x^2(z))^2 - 2kx^2(z) - k^2 \right),
	\end{eqnarray*}
	Now we extend $Y$ to any function $f \in \bigoplus_{n \neq 0} S(\mathbbm{R}) \otimes \Complexn^{\Abs{n}}$ exactly as we did in the previous theorem \ref{Hf1IsVertexAlgebra}, i.e., through the formula \ref{EquationForDefiningFieldsofFunctions}, in particular for every $a \in \Gg$ the logarithmic field $Y(a F_m,z)$ is defined by the formula $Y(a F_m, z) = \bk{a_0, F_m}$ and finally we extend $Y$ to the rest of the vectors via the normally ordered product.
	
	Notice that during all the analysis made in the proof of \ref{Hf1IsVertexAlgebra} to show that $Y(f)$ was actually a field was never used the fact that the logarithmic terms acted by zero, so what we actually prove back there was that the $Y(f)$ for any function was actually a logarithmic field. Similarly we proceeded, on propose, when proving that the fields were pairwise local, so what we actually prove was that those are pairwise local logarithmic fields.

	Let's prove that the function $Y: V_N \rightarrow \Lfield{\Hc}$ preserves the $n$-products, because of the way we defined the fields and equation \ref{DerivativeNProduct} it is clear that $Y$ preserves all negative $n$-products. For positive $n$-products  involving only the fields $e^{2\pi \imu \rho_i x^i(z)}$, $\alpha^i(z)$, $\beta_i(z)$ the $n$-product condition holds, the analysis is complete analogous to the one made in the proof of theorem \ref{HF0Module}. It is also clear that for $n \geq 0$
	\[
	Y({F_{m_1}}_{(n)} F_{m_2}, z)	=	0	=	Y(F_{m_1}, z)_{(n)} Y(F_{m_2}, z)
	\]
	because $Y(F_{m_1}, z)$ and $Y(F_{m_2}, z)$ commute.
	
	For $\alpha^i(z)$ and $Y(F_m,z)$ we have
	\begin{eqnarray*}
		\alpha^i(z)_{(0)} Y(F_m,z)	&	=	&	\left.(z_1-z_2)\bk{\alpha^i(z_1)_{-}, Y(F_m,z_2)} \right\vert_{z_1=z_2 = z}	\\	
		&	=	&	2 \pi \imu n	\delta_{i,3} Y(F_m, z)	\\
		&	=	&	Y(\alpha^i_{(o)} F_m, z).
	\end{eqnarray*}
	
	For the fields $\beta_3(z)$ and $Y(F_m,z)$ there is nothing to prove since they commute.
	
	For the fields $\beta_2(z)$ and $Y(F_m,z)$ holds
	\begin{eqnarray*}
		\beta_2(z)_{(0)} Y(F_m,z)	&	=	&	\left.(z_1-z_2)\bk{\beta_2(z_1)_{-}, Y(F_m,z_2)} \right\vert_{z_1=z_2 = z}	\\	
		&	=	&	2 \pi \imu n W^1 Y(F_m, z_2) \LOG{z_2}	-	2 Y\left(\Theta_m(xe^{-x^2}), z_2 \right)	\\
		&	=	&	\bk{\beta_{2, 0}, Y(F_m, z) }	\\
		&	=	&	Y({\beta_2}_{(0)} F_m, z).
	\end{eqnarray*}
	
	Finally for $\beta_1(z)$ and $Y(F_m,z)$ holds
	\begin{eqnarray*}
		\beta_1(z)_{(0)} Y(F_m,z)	&	=	&	\left.(z_1-z_2)\bk{\beta_1(z_1)_{-}, Y(F_m,z_2)} \right\vert_{z_1=z_2 = z}	\\	
		&	=	&	\scalebox{0.85}{$ - 2 \pi \imu n W^2 Y(F_m, z_2) \LOG{z_2} 	+	2 \pi \imu m Y(F_m, z_2)  +	 2 \pi \imu nY( \Theta_m(x e^{-x^2}), z_2) .$}	\\
		&	=	&	\bk{\beta_{1, 0}, Y(F_m, z) }	\\
		&	=	&	Y({\beta_1}_{(0)} F_m, z).
	\end{eqnarray*}
	
	Therefore $\Hc$	is a logarithmic $V_N$-module.
	
\end{proof}



\end{document}